\documentclass[a4paper,12pt]{amsart}
\usepackage{amssymb,amsxtra,paralist,mathrsfs,upgreek,xcolor}
\usepackage[colorlinks=true,linkcolor=magenta,citecolor=cyan,urlcolor=olive]{hyperref}

\usepackage[left=3cm,right=3cm]{geometry}
\numberwithin{equation}{section}
\newtheorem{theoremA}{Theorem}

\newtheorem{theorem}{Theorem}

\newtheorem{lemma}[theorem]{Lemma}
\newtheorem{proposition}[theorem]{Proposition}

\theoremstyle{remark}
\newtheorem{remark}[theorem]{Remark}
\numberwithin{theorem}{section}

\newcommand{\bH}{\mathbb{H}}

\newcommand{\bR}{\mathbb{R}}

\newcommand{\Real}{\operatorname{Re}}
\newcommand{\Image}{\operatorname{Im}}

\newcommand{\bC}{\mathbb{C}}

\newcommand{\Id}{\operatorname{Id}}
\newcommand{\e}{\mathrm{e}}
\newcommand{\spec}{\operatorname{spec}}
\newcommand{\sF}{\mathscr{F}}
\newcommand{\bc}{\mathbf{c}}
\newcommand{\0}{\mathbf{0}}
\newcommand{\arcosh}{\operatorname{arcosh}}
\newcommand{\bJ}{\mathbf{J}}
\newcommand{\1}{\mathbf{1}}
\newcommand{\SO}{\mathrm{SO}}
\newcommand{\bG}{\mathbb{G}}
\newcommand{\bK}{\mathbb{K}}
\newcommand{\bA}{\mathbb{A}}
\newcommand{\sK}{\mathscr{K}}
\newcommand{\sE}{\mathscr{E}}
\newcommand{\tkappa}{\widetilde{\kappa}}
\newcommand{\fk}{\mathfrak{k}}
\newcommand{\fa}{\mathfrak{a}}
\newcommand{\bI}{\mathbf{I}}
\newcommand{\bN}{\mathbb{N}}
\newcommand{\sS}{S}
\newcommand{\tk}{\widetilde{k}}
\newcommand{\bZ}{\mathbb{Z}}
\renewcommand{\O}{\mathrm{O}}
\renewcommand{\d}{\mathrm{d}}
\renewcommand{\i}{\mathrm{i}}

\title[Inequalities for radial solutions to the wave equation]{Inequalities for radial solutions to the wave equation on the hyperbolic space}

\author[P. Chen, X.T. Duong, Y.X. Wang and L.X. Yan]{Peng Chen,\ Xuan Thinh Duong,\ Yunxiang Wang\ and\ Lixin Yan}
\address{Peng Chen, Department of Mathematics, Sun Yat-Sen (Zhongshan) University, Guangzhou, 510275, P.R. China}
\email{achenpeng1981@163.com}
\address{Xuan Thinh Duong, Department of Mathematics, Macquarie University, NSW 2109, Australia}
\email{xuan.duong@mq.edu.au}
\address{Yunxiang Wang, Department of Mathematics, Sun Yat-Sen (Zhongshan) University, Guangzhou, 510275, P.R. China}
\email{wangyx677@mail.sysu.edu.cn}
\address{Lixin Yan, Department of Mathematics, Sun Yat-Sen (Zhongshan) University, Guangzhou, 510275, P.R. China}
\email{mcsylx@mail.sysu.edu.cn}
\date{\today}
\subjclass[2020]{43A85, 42B15, 35L05}

\keywords{Local smoothing estimate, wave equation, radial function, hyperbolic space.}

\allowdisplaybreaks
\begin{document}

\begin{abstract}
Let $\mathbb H^n$ be the $n$-dimensional hyperbolic space, and $\Delta_{\mathbb H^n}$ be the Laplace--Beltrami operator on $\mathbb H^n$. Let $u=u(t,\cdot)$ be the solution to the wave equation $\partial^2_{t}u-\big(\Delta_{\mathbb H^n}+\frac{1}{4}(n-1)^2\operatorname{Id}\big)u=0$ with initial conditions $u(0,x)=f(x)$ and $\partial_tu(0,x)=0$. In this article we show that for a radial function $f$, $2n/(n-1)\leq p<\infty$, $\gamma<1/p$ and $T\geq 0$, there exists a constant $C_p$ independent of $f$ and $T$ such that
$$
\left(\int_T^{T+1}\|u(t,\cdot)\|^p_p\mathrm d t\right)^{\frac1p}\leq C_{p,\gamma}\,\mathrm e^{(n-1)\left(\frac12-\frac1p\right)T}\left\|(-\Delta_{\mathbb H^n})^{\frac{n-1}2\left(\frac12-\frac1p\right)-\frac\gamma2}f\right\|_p.
$$
We also show that the exponential part $\mathrm e^{(n-1)(1/2-1/p)T}$ of the bound is optimal. 
\end{abstract}

\maketitle

\section{Introduction}\label{sec:1}
\setcounter{equation}{0}

Let $\bH^n$ be the $n$-dimensional hyperbolic space, and $\Delta_{\bH^n}$ be the Laplace--Beltrami operator on $\bH^n$. Recall that $\spec\Delta_{\bH^n}\subset(-\infty,-(n-1)^2/4]$ on $L^2(\bH^n)$. We consider the Cauchy problem for the wave equation
\begin{align}\label{eq:1}
\begin{cases} 
&\partial^2_tu (t,x)-\left(\Delta_{\bH^n}+\frac{(n-1)^2}4\Id\right)\!u(t,x)=0,\quad (t,x)\in\bR\times \bH^n,\smallskip\\
&u(0,x)=f(x),\smallskip\\
&\partial_t u(0,x)=0.
\end{cases}
\end{align}
In 2000 Ionescu \cite[Corollary 1]{Ion00} obtained the following sharp fixed-time $L^p$-estimates.
\begin{theoremA}\label{thmA:A}
Let $u=u(t,x)$ be the solution to \eqref{eq:1}. Suppose $t\in\bR\backslash\{0\}$, $1<p<\infty$ and
\begin{align}\label{e1.1}
\alpha(p,n)= (n-1)\left|\frac1p-\frac12\right|.
\end{align}
Then
\begin{align}\label{eq:2}
\|u(t,\cdot)\|_p\leq C_p\,\e^{(n-1)\left|\frac1p-\frac12\right||t|}\left\|(-\Delta_{\bH^n})^{ {\alpha(p,n)}/ 2}f\right\|_p.
\end{align}
Moreover, the exponential part of the bound $\e^{(n-1)\left|\frac1p-\frac12\right||t|}$ depending on $t$ in \eqref{eq:2} is best possible.
\end{theoremA}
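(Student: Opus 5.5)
This is Ionescu's result, and I would prove it by studying the operator $\cos(t\sqrt{-\Delta_{\bH^n}-(n-1)^2/4})\,(-\Delta_{\bH^n})^{-\alpha(p,n)/2}$ as a spherical multiplier. By duality and self-adjointness of the operator it suffices to take $2\le p<\infty$, so $\alpha=(n-1)(1/2-1/p)$; by time symmetry we may take $t>0$.

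Write $\lambda$ for the spectral parameter, so $-\Delta_{\bH^n}=\lambda^2+(n-1)^2/4$. The multiplier is then
$$m_t(\lambda)=\cos(t\lambda)\,(\lambda^2+(n-1)^2/4)^{-\alpha/2}.$$
I would split it with a smooth cutoff $\chi(\lambda)$ supported in $|\lambda|\le 2$ into a low-frequency part and a high-frequency part.

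\emph{Low frequencies.} The multiplier $\chi(\lambda)\cos(t\lambda)$ is even, entire, and has compactly supported derivatives. Its spherical kernel $k_t$ is obtained by the inverse spherical transform. Using the Harish-Chandra expansion and shifting the contour to $\Image\lambda=(n-1)(1/p-1/2)$ type lines, one finds that $k_t(r)$ behaves like $t^{N}e^{-(n-1)r/2}$ for $r\lesssim t$, with rapid decay beyond. By the Kunze--Stein phenomenon, a radial kernel $k$ convolves $L^p\to L^p$ for $p>2$ with norm about $\int|k(r)|\varphi_{i(1/2-1/p)\rho'}(r)\,\sinh^{n-1}r\,\d r$, where $\rho'=(n-1)$ and $\varphi$ is the appropriate spherical function, growing like $e^{(n-1)(1/2-1/p)r}e^{-(n-1)r/2}$. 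Integrating against $\sinh^{n-1}r\sim e^{(n-1)r}$ over $r\le t$ gives $e^{(n-1)(1/2-1/p)t}$, up to possible polynomial factors. I expect those polynomial losses to be the main obstacle, and I would remove them by a sharper analysis at $p=\infty$-type endpoints followed by interpolation.

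\emph{High frequencies.} Here I would use the wave kernel's finite propagation speed together with the Euclidean Seeger--Sogge--Stein (or Peral) estimate: $\cos(t\sqrt{\cdot})(-\Delta)^{-\alpha/2}$ is bounded on $L^p$ locally. Decompose $\bH^n$ into unit balls. Locally, the kernel of $(1-\chi)m_t$ at distance $r\approx t$ looks like a Euclidean wave kernel of order $-\alpha$, weighted by the factor $e^{-(n-1)r/2}$ coming from the Jacobian. Summing over the sphere of radius $t$, with the same Kunze--Stein weight, again gives $e^{(n-1)(1/2-1/p)t}$. For the endpoint, interpolation between $L^2$ (bounded by $1$) and an $H^1\to L^1$ bound of size $e^{(n-1)t/2}$ produces exactly the exponent $(n-1)|1/p-1/2|$ with derivative loss $\alpha$, via Stein's analytic interpolation on the family $(-\Delta)^{-z(n-1)/4}$.

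\emph{Optimality.} Test with a radial $f$ whose spherical transform is concentrated near $\lambda=i(n-1)(1/p-1/2)$, the edge of the $L^p$ analyticity strip; for instance $f$ a smooth radial function times $\varphi$ restricted to a ball. Then $\cos(t\lambda)$ at that imaginary $\lambda$ grows like $e^{(n-1)|1/p-1/2|t}$. A lower bound for $\|u(t)\|_p$ then follows by pairing with a suitable $L^{p'}$ function.
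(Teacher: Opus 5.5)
\textbf{The splitting fails.} The paper gives no proof of Theorem~\ref{thmA:A}; it quotes it from Ionescu's 2000 paper. So your outline has to stand on its own, and as written it does not. The decisive gap is the splitting $m_t=\chi m_t+(1-\chi)m_t$ with a smooth $\chi$ supported in $|\lambda|\le 2$. On $\bH^n$ this destroys $L^p$-boundedness for every $p\neq 2$.

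By the necessity half of the Clerc--Stein theorem, a radial multiplier $m(D)$ bounded on $L^p(\bH^n)$ extends to a bounded holomorphic function on the strip $|\Image\lambda|<(n-1)|1/p-1/2|$. The nonzero compactly supported function $\chi m_t$ has no such extension. Neither does $(1-\chi)m_t$: it agrees with the holomorphic $m_t$ for $|\lambda|>2$ but not identically. Hence both of your pieces are unbounded on $L^p$.

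Your claim that $\chi(\lambda)\cos(t\lambda)$ is entire is false, so no contour can be shifted. Concretely, the kernel of $\chi m_t$ is in general only $\e^{-(n-1)r/2}$ times a Schwartz function of $r-t$. Your Herz-type integral then behaves like $\int\e^{(n-1)(1/2-1/p)r}|r-t|^{-N}\,\d r=\infty$. Any decomposition must either use a cutoff that is holomorphic and of symbol type on the strip (e.g.\ $\e^{-\lambda^2}$), or be done on the kernel in physical space, as in the proof of Proposition~\ref{prop:3.1}.

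\textbf{The high-frequency part is only a heuristic, and it carries the theorem.} This holds even with a legitimate split.

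First, finite propagation speed holds for $\cos(tD)$, not for $(1-\chi(D))\cos(tD)(-\Delta_{\bH^n})^{-\alpha/2}$. Its kernel has a tail on $r>t+1$ that needs an exponential bound like $\e^{\frac{n-1}2t}\e^{-(n-1)r}(r-t)^{-2}$; this is Ionescu's kernel estimate, which the proof of Proposition~\ref{prop:3.1} borrows.

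Second, near the shell $|r-t|\le 1$ the kernel has size $\e^{-(n-1)t/2}|r-t|^{\alpha-(n+1)/2}$. This is not integrable, since $\alpha<(n-1)/2$. So summing $|k_t|$ against the Kunze--Stein weight cannot work there; cancellation is essential, and that is the job of your $L^2$--$H^1$ interpolation.

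Third, the required $H^1\to L^1$ bound of size $\e^{(n-1)t/2}$ is only asserted. It would need local Seeger--Sogge--Stein bounds for unit pieces of amplitude $\e^{-(n-1)t/2}$, counted over a shell of volume $\sim\e^{(n-1)t}$. It would also need a Hardy space with atoms of radius at most $1$ that interpolates with $L^2$ on the non-doubling space $\bH^n$.

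Fourth, for the family $(-\Delta_{\bH^n})^{-z(n-1)/4}$ with mean-zero atoms, that endpoint fails when $n\ge 6$. At the edge $\lambda=\pm\i(n-1)/2$ of the $L^1$ strip the multiplier blows up like $(\lambda^2+(n-1)^2/4)^{-(n-1)/4}$. The spherical transform of a mean-zero atom generically vanishes there only to first order in $\lambda^2+(n-1)^2/4$. The fix is to work with $-\Delta_{\bH^n}+c\,\Id$, $c>0$, as the paper does in Proposition~\ref{prop:3.1}, and remove $c$ at the end by Proposition~\ref{prop2.3}.

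\textbf{Optimality.} Your idea is sound and can be made exact. For $p>2$ and $0<\gamma<(n-1)(1/2-1/p)$, the function $\varphi_{\i\gamma}$ lies in $L^p$ and satisfies $\cos(tD)\varphi_{\i\gamma}=\cosh(\gamma t)\varphi_{\i\gamma}$ and $-\Delta_{\bH^n}\varphi_{\i\gamma}=((n-1)^2/4-\gamma^2)\varphi_{\i\gamma}$. Letting $\gamma$ increase to $(n-1)(1/2-1/p)$ gives the lower bound, and duality handles $p<2$.
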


The Euclidean counterpart to Theorem \ref{thmA:A} was derived by Miyachi \cite{Miy80} and Peral \cite{Per80}. Let $\Delta_{\bR^d}$ be the standard Laplacian on $\bR^d$. They showed that for $t\in\bR\backslash\{0\}$, $1<p<\infty$ and $\alpha(p,d)= (d-1)|1/p-1/2|$, the solution $v=v(t,x)$ to the wave equation
\begin{align*} 
\begin{cases} 
&\displaystyle {\partial^2_tv} (t,x)-\Delta_{\bR^d}v(t,x)=0,\quad (t,x)\in\bR\times \bR^d,\smallskip\\
&v(0,x)=g(x),\smallskip\\
&\displaystyle \partial_t v (0,x)=0 
\end{cases}
\end{align*}
satisfies
\begin{align}\label{eq:3}
\|v(t,\cdot)\|_{L^p(\bR^d)}\leq C_p\, (1+|t|)^{(d-1)\left|\frac1p-\frac12\right|}\|g\|_{L^p_{\alpha(p,d)}(\bR^d)},
\end{align}
where $L^p_\alpha(\bR^d)$ stands for the standard inhomogeneous $L^p$ Sobolev space on $\bR^d$. It was shown in \cite{MSS92, Sog91} that averaging in time yields a gain in regularity, if $p>2$ and $d\geq 2$; that is, there is $\gamma=\gamma(p,d)>0$ such that for $2<p<\infty$ and $\alpha(p,d)=(d-1)|1/p-1/2|$,
\begin{align}\label{eq:4}
\left(\int_1^2\|v(t,\cdot)\|^p_{L^p(\bR^d)}\d t\right)^{\frac1p}\leq C_p \|g\|_{L^p_{\alpha(p,d)-\gamma}(\bR^d)}.
\end{align} 
In \cite{Sog91}, Sogge conjectured that if $2d/(d-1)\leq p<\infty$, this local smoothing phenomenon should hold with $\gamma<1/p$. This conjecture has attracted considerable attentions over the decades; for an overview of the progress, see \cite{MSS92,Wol00,HNS11,GWZ20,GHLW26} and the references therein. In particular, M\"uller and Seeger \cite{MS95} showed that the conjecture is true for spherically symmetric initial data in 1995. Recently, Guth, Wang and Zhang \cite{GWZ20} resolved Sogge's conjecture for the $2$-dimensional case, while in higher dimensions $d>2$ the conjecture still remains widely open, with the best-known result due to Gan et al. \cite{GHLW26}.

Inspired by the result of M\"uller and Seeger \cite{MS95}, the purpose of this paper is to study the local smoothing phenomenon for wave equation with radial initial data on $\bH^n$. Our main result is the following.
\begin{theorem}\label{thm:1}
Let $u=u(t,x)$ be the solution to \eqref{eq:1}. Suppose $2n/(n-1)\leq p<\infty$ and $f$ is a radial function in $\bH^n$. Let $\gamma<1/p$. Then for $T\geq 0$
\begin{align}\label{eq:4.1}
\left(\int_T^{T+1}\|u(t,\cdot)\|^p_p\d t\right)^{\frac1p}\leq C_{p,\gamma}\, \e^{(n-1)\left(\frac12-\frac1p\right)T}\left\|(-\Delta_{\bH^n})^{(\alpha(p,n)-\gamma)/2}f\right\|_p 
\end{align}
with $\alpha(p,n)$ as in \eqref{e1.1}.
Moreover, the exponential part $\e^{(n-1)(1/2-1/p)T}$ of the bound is optimal.
\end{theorem}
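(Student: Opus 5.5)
We would follow the strategy of M\"uller and Seeger \cite{MS95}, transplanted to $\bH^n$ via the spherical Fourier transform. For radial $f$ we write
\[
u(t,\cdot)=\cos\bigl(t\sqrt{-\Delta_{\bH^n}-\tfrac{(n-1)^2}{4}}\bigr)f,
\]
with multiplier $\cos(t\lambda)$ acting on $\hat f(\lambda)$, where $\lambda\ge 0$ is the spectral parameter. Setting $g=(-\Delta_{\bH^n})^{(\alpha(p,n)-\gamma)/2}f$, it suffices to bound the operator with multiplier $m_t(\lambda)=\cos(t\lambda)(\lambda^2+\rho^2)^{-(\alpha-\gamma)/2}$, $\rho=(n-1)/2$, in $L^p_{\mathrm{rad}}\to L^p(\d t\,\d x)$ on $[T,T+1]\times\bH^n$. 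The first step is a Littlewood--Paley decomposition in $\lambda$. We write $m_t=m_t\chi_0+\sum_{k\ge1}m_t\chi_k$, where $\chi_0$ localizes to $\lambda\lesssim 1$ and $\chi_k$ to $\lambda\sim 2^k$.

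\emph{Low frequencies.} The piece $m_t\chi_0$ is an even entire-type multiplier of bounded frequency. Its kernel, obtained through the inverse spherical transform (Abel transform), is supported essentially in $r\le t+O(1)$. It satisfies the standard bound $|K_t(r)|\lesssim (1+r)\e^{-\rho r}$, cut off at $|r-t|\lesssim 1$ modulo rapidly decaying tails. The Kunze--Stein phenomenon, in the form of the estimate $\|\,\cdot * \kappa\|_{p}\le \|f\|_p\int |\kappa|\varphi_{i(1/p-1/2)\cdot 2\rho}$ valid for radial $\kappa$ when $p>2$, then gives the factor $\e^{(n-1)(1/2-1/p)T}$. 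No $\gamma$-gain is needed here. Alternatively, Theorem~\ref{thmA:A} applied pointwise in $t$ suffices for this piece.

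\emph{High frequencies.} For $\lambda\sim 2^k$ we use the asymptotics of the spherical function,
\[
\varphi_\lambda(r)=\sum_\pm c(\pm\lambda)\e^{(\pm i\lambda-\rho)r}\bigl(1+O(\cdots)\bigr)
\]
for $r\ge1$, together with its Bessel-type approximation for $r\le 1$. The kernel of $m_t\chi_k$ at distance $r$ then becomes a sum of one-dimensional oscillatory integrals in $\lambda$ with phases $\lambda(t\pm r)$, weighted by $\e^{-\rho r}$ (or $(\sinh r)^{-(n-1)/2}$). This reduces the problem to a radial Euclidean-like model in which the measure $(\sinh r)^{n-1}\d r$ replaces $r^{n-1}\d r$.

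We then follow \cite{MS95}. The operator splits into a part localized near the light cone $|r-t|\lesssim 2^{-k}$ and an error part with rapidly decaying tails. For the main part we use the time average, writing $t=T+s$ with $s\in[0,1]$. After conjugating by $(\sinh r)^{(n-1)/2}$, the near-cone part acts like a one-dimensional translation $h(r)\mapsto h(r\pm t)$ composed with a frequency-$2^k$ bump. Averaging over $s$ then gives an $L^p$ bound with constant
\[
2^{k(\alpha-\gamma)}\cdot 2^{-k(n-1)(1/2-1/p)}\cdot 2^{k/p}\ \text{(lost, from thin shell)}\ \cdots
\]
More precisely, we would reproduce the Müller--Seeger estimate, in which $2^{-k(\gamma'-\gamma)}$ is summable for any $\gamma<1/p$. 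The condition $p\ge 2n/(n-1)$ enters exactly as in the Euclidean case: it makes the weight $(\sinh r)^{(n-1)(1/2-1/p)\cdot\ldots}$ integrable near $r=0$ in the $L^p$ norm of the local piece. The exponential factor arises when comparing $(\sinh r)^{(n-1)/p}$ against $\e^{-\rho r}$ at $r\approx t$, and this yields exactly $\e^{(n-1)(1/2-1/p)T}$.

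\emph{Main obstacle.} The hardest step is the uniform control of the error terms in the spherical-function expansion near $r\approx 0$ versus $r\approx T$ large. We also need the interaction between $f$ concentrated at small radius and the kernel at large radius, where the exponential weights must be tracked exactly rather than up to constants. For this we would split $f=f\1_{r\le 1}+f\1_{r>1}$. The first piece is handled by the Bessel approximation combined with the Euclidean radial result. The second piece is handled by the $\e^{\pm i\lambda r}$ expansion after conjugation, which reduces to one-dimensional square-function/maximal bounds.

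\emph{Optimality.} We take $f$ to be a radial bump concentrated near the origin at a fixed frequency. Then $u(t)$ is concentrated on the shell $r\approx t$ with amplitude $\sim\e^{-\rho t}$. Its $L^p$ norm is therefore $\sim \e^{-\rho t}\e^{(n-1)t/p}=\e^{-(n-1)(1/2-1/p)t}$ up to polynomial factors, which is not the right sign. Instead we would take the dual/time-reversed example: $f$ is spread on the shell $r\approx T$ with profile $\e^{-\rho r}\cos(\lambda r)$, so that $u(t)$ focuses at the origin for $t\approx T$. This gives the ratio $\e^{(n-1)(1/2-1/p)T}$ and shows the exponential factor cannot be improved, as in Ionescu's argument for Theorem~\ref{thmA:A}.
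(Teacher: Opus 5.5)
Your proposal leaves the heart of the theorem unproved. The step that produces the gain is the bound for the frequency-$2^k$ piece. You give it only as a product of factors trailing off in ``$\cdots$'', followed by ``we would reproduce the M\"uller--Seeger estimate''. Read as written, those factors multiply to $2^{k(1/p-\gamma)}$, which is summable only for $\gamma>1/p$.

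The mechanism you attach to the threshold is also inverted. Conjugate by $(\sinh\cdot)^{(n-1)/p}$ as in \eqref{eq:3-2}, with $R$ the output radius and $r$ the input radius. For $R\gtrsim 2^{-k}$, the near-cone part of the frequency-$2^k$ kernel is then of size $2^{-k(\alpha(p,n)-\gamma)}(\sinh r/\sinh R)^{(n-1)(\frac12-\frac1p)}$ times an approximate identity in $t\pm R\pm r$. Moreover, $\int_0^1(\sinh R)^{-(n-1)(\frac12-\frac1p)p}\,\d R<\infty$ if and only if $p<2n/(n-1)$. So for $p\ge 2n/(n-1)$ the weight is \emph{not} integrable at the origin. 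The restriction $\gamma<1/p$ comes from truncating this divergence at $R\sim 2^{-k}$, which costs $2^{k(\alpha(p,n)-1/p)}$ (times $k^{1/p}$ at $p=2n/(n-1)$). That cost is affordable only because the time average, used through Schur's test in $(t,r)$ for each fixed $R$, removes the $2^{k/p}$ that a fixed-time bound would lose. The dangerous configuration is therefore data at large radius focusing near the origin, where the weight is exponentially large, not the one you single out.

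There is also a problem specific to $\bH^n$ in the decomposition itself. With $\chi_0,\chi_k$ compactly supported, $\chi_k(D)\cos(tD)$ is unbounded on $L^p(\bH^n)$ for $p\neq 2$, even on radial data and even after averaging in $t$. This follows from the necessity part of Clerc--Stein: an $L^p$ multiplier must extend holomorphically to the strip $|\Image\lambda|<(n-1)|\frac12-\frac1p|$. Concretely, the ``rapidly decaying tails'' of the kernel decay only super-polynomially in $|d(x,y)-t|$. They lose against the weight above, which grows like $\e^{(n-1)(\frac12-\frac1p)(r-R)}$. For the same reason, your Kunze--Stein integral $\int|K_t|\,\varphi_{\i(2/p-1)\rho}$ for the low-frequency piece diverges, and Theorem~\ref{thmA:A} cannot be transferred to $\chi_0(D)\cos(tD)$. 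You would need cutoffs holomorphic in that strip (e.g.\ Gaussian ones) or a spatial truncation of the kernel.

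The paper avoids frequency decompositions altogether. Proposition~\ref{prop:3.1} proves, for $2\le p<2n/(n-1)$, the time-averaged bound with arbitrarily small loss $\delta>0$. It splits $k_{\delta,t}(d(x,y))$ according to whether $|R(x)-R(y)|\le t+1$. The near part is handled by Schur's test with $A_\delta(\tau)=\tau^{\delta-1}(1+\tau)^{-100}$, in the variables $(t,r)$ for $R<1$, where $R^{(n-1)(1-p/2)}$ is integrable precisely because $p<2n/(n-1)$. The far part is handled by Ionescu's exponentially decaying kernel bound and $L^2$--$L^\infty$ interpolation. Complex interpolation with Ionescu's fixed-time estimate as $p_1\to\infty$ then yields exactly $\gamma<1/p$. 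Indeed, the line through $(1/p,\alpha)=(0,\frac{n-1}2)$ and $(\frac{n-1}{2n},0)$ is $\alpha=\frac{n-1}2-\frac np=\alpha(p,n)-\frac1p$.

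Your final optimality example (data on the shell $R\approx T$ focusing at the origin) is the paper's, but it still needs a non-cancellation argument. The paper gets one by inserting $(-\Delta_{\bH^n})^{1/8}$, so that the light-cone singularity $(R-t)^{-\frac n2-\frac34}$ of the kernel dominates the remainder.
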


The paper is organized as follows. In Section \ref{sec:2} we review basic facts about $\bH^n$, including the spherical Fourier transform and spectral multipliers. With these preparations we are able to derive the kernel estimates of the wave propagator and therefore show our main result Theorem \ref{thm:1} in Section \ref{sec:3}. Finally, in Section \ref{sec:A}, we prove endpoint cases of radial local smoothing estimates for $2\leq p<2n/(n-1)$ (cf. Proposition \ref{prop:3.1}) for the case $n=3$ by using the explicit kernel expression of the wave propagator.

\bigskip

%$$$$$$$$$$$$$$$$$$$$$$$$$$$$$$$$$$$$$$$$$$$$$$$$$$$$$$$$$$$$$$$$$$$$$$$$$$

\section{Preliminaries}\label{sec:2}
\setcounter{equation}{0}

In this section we recall some basic facts about the $n$-dimensional hyperbolic space $\bH^n$ as well as its Fourier theory (see for examples, \cite{Hel00, IS09, Pet16}).

\subsection{The hyperbolic space}

We define the $n$-dimensional hyperblic space $\bH^n$ as the hyperboloid
\begin{align*}
\bH^n=\{x=(x_0,x')^\top\in \bR^{n+1}:[x,x]=1,\,x_0>0\}
\end{align*}
and equip this space with the Riemannian metric induced by the Minkowski metric $[\cdot,\cdot]$ on $\bR^{n+1}$ given by
\begin{align}\label{eq:2-0}
[x,y]=x_0y_0-\sum_{j=1}^nx_jy_j,\quad x=(x_0,\dots, x_n)^\top,\ y= (y_0,\dots,y_n)^\top\in {\bR^{n+1}}.
\end{align}
This Riemannian metric induces in turn a geodesic distance $d(\cdot,\cdot)$ given by
\begin{align}\label{eq:2-1}
d(x,y)=\arcosh([x,y]),
\end{align}
as well as a measure $\d x$. We will further distinguish the point $\0=(1,0,\dots,0)^\top$.

$\bH^n$ can also be realized as the symmetric space $\bG/\bK$, where $\bG=\SO_0(1,n)$ is the group of isometries of the Minkowski space $(\bR^{n+1},[\cdot,\cdot])$ leaving $\bH^n$ invariant, and
\begin{align*}
\bK=\left\{\left(\begin{array}{cc}
1 & 0 \\
0 & T
\end{array}\right):T\in\SO(n)\right\}.
\end{align*}
The space $\bG$ admits the Cartan decomposition $\bG=\bK\overline{\bA_+}\bK$, where
\begin{align*}
\overline{\bA_+}=\left\{\fa_r:=\left(\begin{array}{ccc}
\cosh r & \sinh r & 0 \\
\sinh r & \cosh r & 0 \\
0 & 0 & \Id_{n-1} 
\end{array}\right):r\geq 0\right\}
\end{align*}
We identify $\bG/\bK\simeq \bK\overline{\bA_+}$ with $\bH^n$ by the map $\fk\fa_r\mapsto\fk\fa_r\cdot\0$. Note that according to \eqref{eq:2-0} and \eqref{eq:2-1}, the geodesic distance $d(\cdot,\cdot)$ is invariant under left translations by $\bK$, i.e., for any $x,y\in\bH^n$ and $\fk\in\bK$,
\begin{align}\label{eq:2-2}
d(\fk\cdot x,\fk\cdot y)=d(x,y).
\end{align}
In particular, it follows from \eqref{eq:2-0} and \eqref{eq:2-1} that for $x=\fk\fa_r\in\bH^n$,
\begin{align}\label{eq:2-3}
d(x,\0)=d(\fa_r,\fk^{-1}\0)=d(\fa_r,\0)=r.
\end{align}
Furthermore, under these coordinates
\begin{align}\label{eq:7.1}
\int_{\bH^n}f(x)\,\d x=\frac{2\uppi^{\frac n2}}{\Gamma\!\left(\frac n2\right)}\int_0^\infty\int_{\bK}f(\fk\fa_r)\,(\sinh r)^{n-1}\d \fk\,\d r,
\end{align}
where $\d\fk$ is the normalized Haar measure on $\bK$.

\subsection{Spherical Fourier transform on \texorpdfstring{$\bH^n$}{Hn}}

Radial functions on $\bH^n$ depend on $R(\cdot):=d(\cdot,\0)$ only. In other words, for such a function $f$, there is another function $h$ defined on $[0,\infty)$ such that $f(x)=h(R(x))$, and it follows from \eqref{eq:2-3} and \eqref{eq:7.1} that
\begin{align}\label{eq:2-4}
\int_{\bH^n}f(x)\,\d x=\frac{2\uppi^{\frac n2}}{\Gamma\!\left(\frac n2\right)}\int_0^\infty h(r)\,(\sinh r)^{n-1}\d r.
\end{align}

The spherical Fourier transform of the above function $f$ can be written as
\begin{align}\label{eq:5}
\sF(f)(s)=\frac{2\uppi^{\frac n2}}{\Gamma\!\left(\frac n2\right)}\int_0^\infty h(r)\, \varphi_s(r)\,(\sinh r)^{n-1}\d r
\end{align}
and the inverse Fourier transform is given by
\begin{align}\label{eq:2-5}
f(x)=\frac{2^{n-3}\Gamma\!\left(\frac n2\right)}{\uppi^{\frac n2+1}}\int_0^{\infty} \sF(f)(s)\,\varphi_s(R(x))\, |\bc(s)|^{-2}\,\d s.
\end{align}
Here $\varphi_s(r)$ is the spherical function on $\bH^n$ given by
\begin{align}\label{eq:8}
\varphi_s(r)=\frac{2^{\frac{n-3}2}\Gamma\!\left(\frac n2\right)}{\sqrt{\uppi}\,\Gamma\!\left(\frac{n-1}2\right)}(\sinh r)^{2-n}\int_{-r}^r\e^{\i s\tau}(\cosh r-\cosh\tau)^{\frac{n-3}2}\d \tau,
\end{align}
see \cite[(3.1.4)]{Bra94}, and $\bc$ is the Harish-Chandra function given by
\begin{align}\label{eq:7}
\bc(s)=\frac{2^{n-2}\Gamma\!\left(\frac n2\right)}{\sqrt{\uppi}}\frac{\Gamma(\i s)}{\Gamma\!\left(\frac{n-1}2+\i s\right)}.
\end{align}

Let
\begin{align*}
\sS^\sigma=\left\{m\in C^\infty(\bR):\sup_{s\in\bR}(1+|s|)^{-\sigma+\ell}|m^{(\ell)}(s)|\leq C_\ell\text{ for }\ell=0,1,\dots\right\}.
\end{align*}
be the symbol class of order $\sigma$. We have the following asymptotic expansions of the Harish-Chandra $\bc$-function and the spherical function $\varphi_s(r)$.
\begin{lemma}\label{lem:3.0}
\begin{asparaenum}[\rm (i)]
\item\label{lem:3.0(ii)} The functions $(s\,\bc(s))^{-1},\ (s\,\bc(-s))^{-1}\in S^{(n-3)/2}$, while $s\,\bc(s),\ s\,\bc(-s)\in S^{-(n-3)/2}$.
\item\label{lem:3.0(iii)} $|\bc(\cdot)|^{-2}=(\bc(s)\,\bc(-s))^{-1}\in\sS^{n-1}$. In particular, for $s\geq 2$,
\begin{align*}
|\bc(s)|^{-2}=\left(\frac{2^{n-2}\Gamma\!\left(\frac n2\right)}{\sqrt{\uppi}}\right)^{-2}\left(s^{n-1}+a(s)\right),
\end{align*}
where $a\in \sS^{n-2}$.
\end{asparaenum}
\end{lemma}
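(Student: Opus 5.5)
The plan is to read everything off the explicit formula \eqref{eq:7}, using Stirling's asymptotics for ratios of Gamma functions together with the standard fact that ratios of Gamma functions have symbol-type derivative bounds. Set $\kappa=\frac{2^{n-2}\Gamma(n/2)}{\sqrt{\uppi}}$. Since $s\Gamma(\i s)=-\i\,\Gamma(1+\i s)$, we get
\begin{align*}
s\,\bc(s)=-\i\kappa\,\frac{\Gamma(1+\i s)}{\Gamma\!\left(\frac{n-1}2+\i s\right)},\qquad (s\,\bc(s))^{-1}=\frac{\i}{\kappa}\,\frac{\Gamma\!\left(\frac{n-1}2+\i s\right)}{\Gamma(1+\i s)}.
\end{align*}
Both functions are smooth on all of $\bR$. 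Indeed, $\Gamma(1+\i s)$ and $\Gamma(\frac{n-1}2+\i s)$ have no zeros and no poles for real $s$, because $1>0$ and $\frac{n-1}2>0$ when $n\geq2$. Hence smoothness and the bounds on any compact set are automatic, and only the behaviour as $|s|\to\infty$ needs checking. The same reasoning applies to $\bc(-s)$, since $\bc(-s)=\overline{\bc(s)}$ for real $s$.

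For \eqref{lem:3.0(ii)}, consider $g(z)=\Gamma(z+a)/\Gamma(z+b)$ on the vertical line $z=\i s$, with $a,b>0$. Stirling's formula gives the asymptotic expansion $g(z)\sim z^{a-b}\bigl(1+\sum_{k\geq1}c_kz^{-k}\bigr)$, valid uniformly in the sector $|\arg z|\leq\pi-\delta$, which contains the imaginary axis. Because $g$ is holomorphic in this sector away from a compact set, Cauchy's estimates on discs of radius $\sim|z|/2$ upgrade the bound $|g(z)|\lesssim|z|^{a-b}$ to $|g^{(\ell)}(z)|\lesssim|z|^{a-b-\ell}$. Restricting to $z=\i s$ then gives $g(\i s)\in S^{\Real(a-b)}$. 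Taking $a=\frac{n-1}2$, $b=1$ yields $(s\bc(s))^{-1}\in S^{(n-3)/2}$, and taking $a=1$, $b=\frac{n-1}2$ yields $s\bc(s)\in S^{-(n-3)/2}$. The versions with $-s$ follow by reflection.

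For \eqref{lem:3.0(iii)}, first note that $|\bc(s)|^{-2}=(\bc(s)\bc(-s))^{-1}$ is smooth on $\bR$, including at $s=0$. This holds because $(\bc(s)\bc(-s))^{-1}=s^2\,(s\bc(s))^{-1}(-s\bc(-s))^{-1}\cdot(-1)$, a product of smooth functions. The symbol class is closed under multiplication with orders adding, so $|\bc|^{-2}\in S^{2+(n-3)/2+(n-3)/2}=S^{n-1}$.

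For the expansion, write $(s\bc(s))^{-1}=\frac{\i}{\kappa}(\i s)^{(n-3)/2}(1+r_+(s))$ with $r_+\in S^{-1}$; this is the first Stirling term plus a remainder, and the remainder lies in $S^{-1}$ by the same Cauchy-estimate argument. Similarly, $(-s\bc(-s))^{-1}=\frac{\i}{\kappa}(-\i s)^{(n-3)/2}(1+r_-(s))$. For $s\geq2$ the branches give $(\i s)^{(n-3)/2}(-\i s)^{(n-3)/2}=s^{n-3}$. Multiplying,
\begin{align*}
|\bc(s)|^{-2}=-s^2\cdot\frac{\i^2}{\kappa^2}\,s^{n-3}(1+r_+)(1+r_-)=\kappa^{-2}\bigl(s^{n-1}+a(s)\bigr),
\end{align*}
with $a=s^{n-1}(r_++r_-+r_+r_-)\in S^{n-2}$ on $s\geq2$.

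The main obstacle is the uniform derivative control in Stirling's formula along the imaginary axis, needed to show that the remainders are genuine symbols. I would handle it as above, via holomorphy in a sector plus Cauchy estimates, rather than by differentiating the asymptotic series term by term.
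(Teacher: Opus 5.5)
Your proposal is correct and takes the same approach as the paper. The paper derives both parts directly from the explicit formula \eqref{eq:7}, Stirling's formula and the symmetry $\overline{\Gamma(z)}=\Gamma(\overline{z})$. Your rewriting $s\Gamma(\i s)=-\i\,\Gamma(1+\i s)$, the factorization that gives smoothness of $|\bc|^{-2}$ at $s=0$, and the Cauchy-estimate argument for the derivative bounds supply details the paper leaves implicit.
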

\begin{proof}
\eqref{lem:3.0(ii)} follows from \eqref{eq:7} and Stirling's formula \cite[p. 47, (4)]{EMOT53}.\smallskip

The first assertion of \eqref{lem:3.0(iii)} follows from \eqref{eq:7} and the property that $\overline{\Gamma(z)}=\Gamma(\overline{z})$, while the last assertion of \eqref{lem:3.0(iii)} is a straightforward consequence of Stirling's formula \cite[p. 47, (4)]{EMOT53}.
\end{proof}
\begin{lemma}\label{lem:3.1}
\begin{asparaenum}[\rm (i)]
\item\label{lem:3.1(i)} For $0<r\leq 1$,
\begin{align*}
\varphi_s(r)=(\sinh r)^{-\frac{n-1}2}\left(\e^{\i sr}a(s,r)+\e^{-i sr}a(-s,r)\right),
\end{align*}
where $a(\cdot,r)\in\sS^{-(n-1)/2}$ uniformly in $0<r\leq 1$.
\item\label{lem:3.1(ii)} For $r\geq 1/2$,
\begin{align*}
\varphi_s(r)=(\sinh r)^{-\frac{n-1}2}\left(\e^{\i sr}\bc(s)\,b(s,r)+\e^{-\i sr}\bc(-s)\,b(-s,r)\right)\!,
\end{align*}
where $b(\cdot,r)\in \sS^0$ uniformly in $r\geq 1/2$. In particular, if $s\geq 2$, then
\begin{align*}
\varphi_s(r)&=\frac{2^{\frac{n-1}2}}{\sqrt{\uppi}}\Gamma\!\left(\frac n2\right)(\sinh r)^{-\frac{n-1}2}s^{-\frac{n-1}2}\cos\!\left(sr-\frac{(n-1)\uppi}4\right)\\
&\qquad+(\sinh r)^{-\frac{n-1}2}\big(\e^{\i sr}a(s,r)+\e^{-\i sr}a(-s,r)\big),
\end{align*}
where $a(\cdot,r)\in\sS^{-(n+1)/2}$ uniformly in $r\geq 1/2$.
\end{asparaenum}
\end{lemma}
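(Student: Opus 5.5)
The plan is to derive both parts of Lemma~\ref{lem:3.1} from the integral representation \eqref{eq:8}, i.e.\ from the Abel-type formula
\[
\varphi_s(r)=c_n(\sinh r)^{2-n}\int_{-r}^r\e^{\i s\tau}(\cosh r-\cosh\tau)^{\frac{n-3}2}\,\d\tau .
\]
This is an oscillatory integral whose only non-smooth points are the endpoints $\tau=\pm r$, where the amplitude vanishes like $(r\mp\tau)^{(n-3)/2}$. Away from the endpoints the amplitude is smooth and contributes only rapidly decaying terms. So I would insert a smooth partition of unity in $\tau$ that localizes near $\tau=r$ and near $\tau=-r$. Since the integrand is even in $\tau$ up to $s\mapsto -s$, the $\tau=-r$ piece is the $s\mapsto-s$ image of the $\tau=r$ piece, and this produces the symmetric structure $\e^{\i sr}a(s,r)+\e^{-\i sr}a(-s,r)$.

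\textbf{Part (i), $0<r\le 1$.} Rescale $\tau=r\sigma$, so that $\cosh r-\cosh(r\sigma)=r^2(1-\sigma^2)\,g(r,\sigma)$ with $g$ smooth and positive, uniformly for $r\le1$. Near $\sigma=1$, write the phase as $\e^{\i sr}\e^{-\i sr(1-\sigma)}$. The resulting integral is
\[
\int_0^\infty \e^{-\i sr u}\,u^{\frac{n-3}2}\,\chi(u)\,G(u,r)\,\d u ,
\]
with $\chi,G$ smooth. By the standard asymptotics of Fourier transforms of $u_+^{\beta}\chi$ (Erdélyi's lemma), this is a symbol of order $-(n-1)/2$ in the variable $sr$. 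Multiplying by the prefactors $r^{n-3}\cdot r\cdot(\sinh r)^{2-n}$ and comparing with $(\sinh r)^{-(n-1)/2}$ leaves a factor $r^{(n-1)/2}$. That factor combines with the estimate $|\partial_s^\ell m(sr)|\lesssim r^\ell(1+|s|r)^{-(n-1)/2-\ell}$. Since $r^{(n-1)/2}(1+|s|r)^{-(n-1)/2}\lesssim(1+|s|)^{-(n-1)/2}$ and $r^\ell(1+|s|r)^{-\ell}\lesssim(1+|s|)^{-\ell}$ for $r\le1$, we obtain $a(\cdot,r)\in S^{-(n-1)/2}$ uniformly in $r$.

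\textbf{Part (ii), $r\ge1/2$.} Here the rescaling is not uniform, so I would not use the integral formula directly. Instead I would use the Harish-Chandra expansion $\varphi_s=\bc(s)\Phi_s+\bc(-s)\Phi_{-s}$ with
\[
\Phi_s(r)=\e^{(\i s-\rho)r}\sum_k\Gamma_k(s)\e^{-2kr},\qquad \rho=\tfrac{n-1}2 .
\]
The series converges for $r\ge1/2$, and the coefficients satisfy Gangolli-type bounds that are uniform in $s$ together with their derivatives. Equivalently one can use the hypergeometric representation of $\varphi_s$ and its connection formula at $\infty$. Setting $b(s,r)=\e^{\rho r}(\sinh r)^{\rho}\sum_k\Gamma_k(s)\e^{-2kr}$ gives $b\in S^0$ uniformly, provided the symbol bounds on $\Gamma_k$ and on their derivatives are verified. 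Those derivative bounds, uniform in $r\ge 1/2$ (where $\e^{-2kr}\le \e^{-k}$ gives summability), are the main technical obstacle. I would get them by induction from the recursion for $\Gamma_k$, or by citing known estimates (e.g.\ Anker--Pierfelice, Ionescu), near $s=0$ handling the pole of $\bc$ by the $s\bc(s)$ normalization of Lemma~\ref{lem:3.0}.

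\textbf{The explicit formula for $s\ge2$.} This follows by expanding $b(s,r)=b_0(r)+O_{S^{-1}}$ and $\bc(s)$ via Stirling:
\[
\bc(s)=\frac{2^{n-2}\Gamma(n/2)}{\sqrt\uppi}\,(\i s)^{-\frac{n-1}2}\bigl(1+S^{-1}\bigr).
\]
Combining with $\e^{\rho r}(\sinh r)^{\rho}\to2^{-\rho}$ gives the constant $2^{\frac{n-1}2}\Gamma(n/2)/\sqrt\uppi$. The phase $\i^{-(n-1)/2}=\e^{-\i(n-1)\uppi/4}$ yields $\cos\bigl(sr-\frac{(n-1)\uppi}4\bigr)$. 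The remainders, including $\e^{\rho r}(\sinh r)^\rho-2^{-\rho}=O(\e^{-2r})$ and the terms with $k\ge1$, are absorbed into $a\in S^{-(n+1)/2}$.
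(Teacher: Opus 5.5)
Your part (i) is correct and takes a different route from the paper. The paper quotes the Stanton--Tomas Bessel-series expansion \eqref{eq:7.0} and applies Bessel asymptotics termwise. You instead work directly from the Abel integral \eqref{eq:8}: the rescaling $\tau=r\sigma$ and Erd\'elyi's lemma handle $u_+^{(n-3)/2}$ times a smooth factor, and the bookkeeping $r^{(n-1)/2}r^{\ell}(1+|s|r)^{-(n-1)/2-\ell}\le(1+|s|)^{-(n-1)/2-\ell}$ is right. This is more self-contained than the paper's argument. The first assertion of (ii) is the same Harish-Chandra argument as the paper's, with a sign typo: it should be $b=\e^{-\rho r}(\sinh r)^{\rho}\sum_k\Gamma_k\e^{-2kr}$, not $\e^{+\rho r}$, and the same slip appears in your limit $\e^{\rho r}(\sinh r)^\rho\to 2^{-\rho}$.

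The gap is in the explicit formula for $s\ge2$. Your remainder list shows that you take the $k=0$ term $\e^{-\rho r}(\sinh r)^\rho$ as the leading coefficient. You then discard both $\e^{-\rho r}(\sinh r)^\rho-2^{-\rho}=\O(\e^{-2r})$ and the $k\ge1$ terms into $a\in\sS^{-(n+1)/2}$. Neither belongs there.

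A coefficient $\O(\e^{-2r})$ that does not decay in $s$, multiplied by $\bc(s)\sim s^{-(n-1)/2}$, has order exactly $-(n-1)/2$, and $\e^{-2r}$ is not small uniformly on $r\ge1/2$.

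In your normalization $\Phi_s=\e^{(\i s-\rho)r}\sum_k\Gamma_k\e^{-2kr}$, the coefficients do not decay in $s$ either. The recursion $k(k-\i s)\Gamma_k=\rho\sum_{j<k}(\rho+2j-\i s)\Gamma_j$ gives $\Gamma_k(s)\to\binom{\rho+k-1}{k}$ as $s\to\infty$. For $n=3$ one has $\Gamma_k\equiv1$ and $b\equiv1/2$. There your leading term is $(1-\e^{-2r})/2$ and the $k\ge1$ tail is $\e^{-2r}/2$, and each alone contributes at order $s^{-1}$, not $s^{-2}$.

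The two non-decaying pieces cancel because $\sum_k\binom{\rho+k-1}{k}\e^{-2kr}=(1-\e^{-2r})^{-\rho}$, which gives $b(s,r)=2^{-\rho}+\sS^{-1}$ uniformly. That cancellation is exactly what has to be shown, and your argument throws away the two pieces separately instead.

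The clean fix is the normalization encoded in the paper's \eqref{eq:7.01}. Set $\psi=(\sinh r)^{\rho}\varphi_s$; it solves $\psi''+(s^2-\rho(\rho-1)\sinh^{-2}r)\psi=0$. Writing $\psi=\e^{\i sr}\sum_\ell\gamma_\ell\e^{-2\ell r}$ gives $\ell(\ell-\i s)\gamma_\ell=\rho(\rho-1)\sum_{j<\ell}(\ell-j)\gamma_j$. Hence $\gamma_0=1$ and $\gamma_\ell\in\sS^{-1}$ for $\ell\ge1$, with the polynomial-in-$\ell$ bounds \eqref{eq:3-3.3} giving summability against $\e^{-2\ell r}\le\e^{-\ell}$. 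The leading coefficient is then the constant $2^{-\rho}$, no $\O(\e^{-2r})$ term appears, and your Stirling computation gives the stated constant and a remainder in $\sS^{-(n+1)/2}$.
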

\begin{proof}
To show \eqref{lem:3.1(i)} we shall use the following asymptotic expansion \cite[Theorem2.1]{ST78} for $0<r\leq 1$:
\begin{align}\label{eq:7.0}
\varphi_s(r)&=\frac{\Gamma\!\left(\frac n2\right)}{\Gamma\!\left(\frac{n-1}2\right)}(\sinh r)^{-\frac{n-1}2}\sum_{\ell=0}^\infty 2^{\frac{n-2}2+\ell}\Gamma\!\left(\frac{n-1}2+\ell\right)c_\ell(r)\\
&\hphantom{=\frac{\Gamma(\frac n2)}{\Gamma(\frac{n-1}2)}(\sinh r)^{-\frac{n-1}2}\sum_{\ell=0}^\infty}\ \ \ \ \times r^{\frac{n-1}2+2\ell}(sr)^{-\frac{n-2}2-\ell}J_{\frac{n-2}2+\ell}(sr),\notag
\end{align}
where $J_\mu$ is the Bessel function of order $\mu$, and the coefficients $c_\ell$ satisfy
\begin{align*}
c_0\equiv 1,\quad |c_\ell(t)|\leq CR^{-\ell}\ (\ell\in\bN^*)
\end{align*}
for some constant $R>1$. It's well-known that the Bessel function satisfies the following asymptotic expansion:
\begin{align*}
\rho^{-\mu}J_\mu(\rho)=\e^{\i\rho}\psi_{-\mu-1/2}(\rho)+\e^{-\i\rho}\psi_{-\mu-1/2}(-\rho)\quad\text{for }\rho>0,
\end{align*}
where $\psi_{-\mu-1/2}\in\sS^{-\mu-1/2}$. Since for $\ell\in\bN$,
\begin{align*}
r^{\frac{n-1}2+2\ell}\psi_{-\frac{n-1}2-\ell}(r\cdot)\in\sS^{-\frac{n-1}2-\ell}\quad\text{uniformly in }0<r\leq 1,
\end{align*}
in view of \eqref{eq:7.0}, for $r$ as above the spherical function $\varphi_s(r)$ satisfies the desired asymptotic expansion.\smallskip

The first assertion in \eqref{lem:3.1(ii)} follows readily from the Harish-Chandra expansion \cite[(8) and (10)]{APV12}:
\begin{align}\label{eq:7.01}
\varphi_s(r)=(2\sinh r)^{-\frac{n-1}2}\left(\e^{\i sr}\bc(s)\sum_{\ell=0}^\infty\frac{\gamma_\ell(s)}{\e^{2\ell r}}+\e^{-\i sr}\bc(-s)\sum_{\ell=0}^\infty\frac{\gamma_\ell(-s)}{\e^{2\ell r}}\right)\!,
\end{align}
where the coefficients $\gamma_\ell$ satisfy
\begin{align}\label{eq:3-3.3}
\gamma_0\equiv 1,\quad|\partial^j_s\gamma_\ell(s)|\leq C_j\ell^\nu (1+|s|)^{-j-1}\ (\ell\in\bN^*,\ j\in\bN,\ s\in\bR^*)
\end{align}
for some constant $\nu>0$. The last assertion of \eqref{lem:3.1(ii)} is indeed \cite[Proposition 3.1]{WY25}.
\end{proof}

\subsection{Spectral multipliers on \texorpdfstring{$\bH^n$}{Hn}}

In the following, we denote
\begin{align*}
D=\sqrt{-\Delta_{\bH^n}-\frac{(n-1)^2}4\Id},
\end{align*}
where $\Delta_{\bH^n}$ is the Laplace--Beltrami operator on $\bH^n$ with $\spec\Delta_{\bH^n}\subset (-\infty,-(n-1)^2/4]$ on $L^2(\bH^n)$. For any bounded Borel function $m:[0,\infty)\to\bC$, we define the radial Fourier multiplier $m(D)$ by the formula
\begin{eqnarray}\label{eq:9}
m(D)(f)(x)=f*k_m(x):=\int_{\bH^n}f(y)\,k_m(d(x,y))\,\d y,
\end{eqnarray}
where $k_m$ satisfies $m=\sF(k_m)$, or equivalently
\begin{align}\label{eq:9.0}
k_m(r)=\frac{2^{n-3}\Gamma\!\left(\frac n2\right)}{\uppi^{\frac n2+1}}\int_0^{\infty} m(s)\,\varphi_s(r)\, |\bc(s)|^{-2}\,\d s
\end{align}
by the Fourier inversion formula \eqref{eq:2-5}. In addition, it follows from the spectral theorem that
\begin{align}\label{eq:9.1}
\|f*k_m\|_2=\|m(D)(f)\|_2\leq \|m\|_{L^\infty([0,\infty))}\|f\|_2=\|\sF(k_m)\|_{L^\infty([0,\infty))}\|f\|_2.
\end{align}

Particularly, for any radial function $f$, in view of \cite[p. 10]{GL23} and \eqref{eq:5}, we have
\begin{align}
m(D)(f)(x)&=C\int_0^\infty \sF(f)(s)\,m(s)\,\varphi_s(R(x))\,|\bc(s)|^{-2}\,\d s\notag\\
&=C\int_0^\infty\!\left(\int_0^\infty h(r)\,\varphi_s(r)\,(\sinh r)^{n-1}\,\d r\right)m(s)\,\varphi_s(R(x))\,|\bc(s)|^{-2}\,\d s\notag\\
&=C\int_0^\infty h(r)\,\kappa_m(R(x),r)\,\d r,\label{eq:10}
\end{align}
where $R(x)=d(x,\0)$, $h$ is the function defined on $[0,\infty)$ such that $f(x)=h(R(x))$, and
\begin{align}\label{eq:11}
\kappa_m(R,r)&=(\sinh r)^{n-1}\int_0^\infty\varphi_s(R)\,\varphi_s(r)\,m(s)\,|\bc(s)|^{-2}\,\d s.
\end{align}
On the other hand, by \eqref{eq:2-2} and \eqref{eq:2-3}, we write \eqref{eq:9} in the Cartan coordinates \eqref{eq:7.1} to obtain
\begin{align}\label{eq:11.1}
m(D)(f)(x)&=C\int_0^\infty h(r)\,\left(\int_{\bK}k_m(d(x,\fk\fa_r))\,\d\fk\right)(\sinh r)^{n-1}\d r\notag\\
&=C\int_0^\infty h(r)\,\left(\int_{\bK}k_m(d(\fa_{R(x)},\fk\fa_r))\,\d\fk\right)(\sinh r)^{n-1}\d r.
\end{align}
Comparing \eqref{eq:10} with \eqref{eq:11.1}, we have
\begin{align}\label{eq:11.2}
\kappa_m(R,r)=C\,(\sinh r)^{n-1}\int_{\bK}k_m(d(\fa_R,\fk\fa_r))\,\d\fk
\end{align}
for a.e. $R,r>0$.

We finish this section by recalling the following multiplier theorem on $\bH^n$ was obtained by Clerc and Stein \cite{CS74}, see also \cite[Theorem A]{Tay89}.
\begin{proposition}[{\cite[Theorem 3]{CS74}}]\label{prop2.3}
	Let $1<p<\infty$. Suppose $m$ is an even bounded Borel function on $[0, \infty)$, analytic inside the tube $\mathcal T_p:=\{z\in \bC: |\Image z|<(n-1)|1/2-1/p|\}$, smooth on $\overline{\mathcal{T}}_p$, satisfying the symbol estimate
	\begin{align*}
		\left|m^{(\ell)}(\zeta)\right|\lesssim_\ell (1+|\zeta|)^{-\ell},\qquad \zeta\in\overline{\mathcal{T}}_p\text{ and } \ell=0,1,\dots.
	\end{align*}
	Then $m(D)$ is bounded on $L^p(\bH^n)$.
\end{proposition}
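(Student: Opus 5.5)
The plan is to follow the classical strategy for radial convolution operators on $\bH^n$: write $m(D)f=f*k_m$ with $k_m$ given by \eqref{eq:9.0}, and split $k_m=k_m\chi_{\{r\le1\}}+k_m\chi_{\{r>1\}}=:k^0+k^\infty$ with a smooth cutoff. By duality, and since $\mathcal T_p=\mathcal T_{p'}$, it suffices to treat $2<p<\infty$. Set $\eta_p:=(n-1)(1/2-1/p)$, so $\mathcal T_p=\{|\Image z|<\eta_p\}$.

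\emph{Local part.} For $r\le1$ I use Lemma \ref{lem:3.1}\eqref{lem:3.1(i)} together with $|\bc(s)|^{-2}\in\sS^{n-1}$ from Lemma \ref{lem:3.0}\eqref{lem:3.0(iii)}. Since $m\in\sS^0$ on the real line, integration by parts in $s$ gives the Calder\'on--Zygmund bounds
$$|k^0(r)|\lesssim r^{-n},\qquad |\partial_r k^0(r)|\lesssim r^{-n-1}.$$
Moreover $f\mapsto f*k^0$ is $L^2$-bounded. This follows from \eqref{eq:9.1} once one checks that $\sF(k^\infty)$ is bounded, which the far-part analysis below provides. Since $k^0$ is supported in the unit ball and $\bH^n$ has bounded geometry, the local Calder\'on--Zygmund theory (covering by unit balls with bounded overlap) gives boundedness on $L^p$ for all $1<p<\infty$.

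\emph{Far part.} Here I use the Kunze--Stein phenomenon in Herz's form: a radial kernel $K$ acts boundedly on $L^p(\bH^n)$ if
$$\int_0^\infty|K(r)|\,\varphi_{\i\eta_p}(r)\,(\sinh r)^{n-1}\,\d r<\infty,$$
where $\varphi_{\i\eta_p}(r)\asymp(1+r)\,\e^{-(n-1)r/p}$. It therefore suffices to prove
$$|k^\infty(r)|\lesssim (1+r)^{-3}\,\e^{-\left(\frac{n-1}2+\eta_p\right)r},\qquad r\ge1.$$

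\emph{Contour shift.} Using evenness of $m$, the Harish-Chandra expansion \eqref{eq:7.01} and $|\bc(s)|^{-2}=(\bc(s)\bc(-s))^{-1}$, rewrite \eqref{eq:9.0} for $r\ge1$ as a single integral over $\bR$:
$$k_m(r)=C(\sinh r)^{-\frac{n-1}2}\int_{\bR}m(s)\,\bc(-s)^{-1}\,\e^{\i sr}\sum_{\ell\ge0}\gamma_\ell(s)\,\e^{-2\ell r}\,\d s.$$
Then shift the contour to $\Image s=\eta_p$, which is allowed by the analyticity of $m$ in $\mathcal T_p$. The factor $\bc(-s)^{-1}=C\,\Gamma(\frac{n-1}2-\i s)/\Gamma(-\i s)$ has poles only at $s=-\i(\frac{n-1}2+k)$, $k\in\bN$, so no pole is met when moving into the upper half-plane. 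The factor $\e^{\i sr}$ produces the gain $\e^{-\eta_p r}$, and with $(\sinh r)^{-(n-1)/2}$ this gives the required exponent $\frac{n-1}2+\eta_p$. The polynomial gain $(1+r)^{-3}$ comes from integrating by parts along the shifted line. This uses the symbol estimates for $m$ on $\overline{\mathcal T}_p$ together with Lemma \ref{lem:3.0}\eqref{lem:3.0(ii)}, which extends to the strip by Stirling's formula. The factor $s^{(n-1)/2}$ growth of $\bc(-s)^{-1}$ is harmless for $r\ge1$, because repeated integration by parts gains arbitrarily many powers of $r^{-1}$.

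\emph{Main obstacle.} The difficulty is justifying the shift term by term in the Harish-Chandra series. The coefficients $\gamma_\ell(s)$ have poles at points $s=\i\ell'$ which may lie in $\mathcal T_p$ when $\eta_p\ge1$, since $\eta_p$ can approach $(n-1)/2$. I would first shift only the $\ell=0$ term, which is pole-free. For $\ell\ge1$, each term carries the extra factor $\e^{-2\ell r}$, which already exceeds the needed decay $\e^{-\eta_p r}$ because $2\ell\ge2>\eta_p$ whenever $\eta_p<2$. Otherwise I would shift each term only to height $\min(\eta_p,\ell-\tfrac12)$, and use the bound $\ell^\nu$ from \eqref{eq:3-3.3} to sum over $\ell$. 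If this bookkeeping fails, the fallback is Anker's argument: bound $k_m$ through the Abel transform, which reduces the problem to a one-dimensional Paley--Wiener estimate for the Euclidean Fourier transform of $m$ on the strip $\mathcal T_p$.
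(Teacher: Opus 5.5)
The paper does not prove this statement; it quotes it from Clerc--Stein and from Taylor's Theorem~A. Your route is the spherical-analysis proof in the spirit of Stanton--Tomas and Anker, and it is sound in outline. The local piece is a compactly supported Calder\'on--Zygmund operator. In the far piece the exponents match exactly: $\frac{n-1}2+\eta_p=(n-1)(1-\frac1p)$ cancels the growth of $\varphi_{\i\eta_p}(r)(\sinh r)^{n-1}$ in Herz's criterion.

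Taylor's argument takes a different route, through the wave group. One writes $m(D)=\frac1{2\pi}\int_{\bR}\widehat m(t)\cos(tD)\,\d t$ and treats $|t|\lesssim1$ as a local pseudodifferential operator. For $|t|\sim j$, finite propagation speed, the volume growth $\e^{(n-1)j}$ and $L^1$--$L^2$ interpolation give $L^p$ bounds of size $\e^{\eta_p j}$. These are beaten by the decay $|\widehat m(t)|\lesssim_N|t|^{-N}\e^{-\eta_p|t|}$ that holomorphy and symbol bounds on $\overline{\mathcal T}_p$ force. That argument avoids spherical functions and works on any manifold of bounded geometry with exponential volume growth. Yours is tied to $\bH^n$, but it yields the sharp pointwise bound $|k^\infty(r)|\lesssim_N r^{-N}\e^{-(n-1)(1-\frac1p)r}$.

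The real flaw is your ``main obstacle'': it rests on a sign error, and the workaround you propose would not close. Put $\rho=\frac{n-1}2$ and make the substitution $\phi=(\sinh r)^{-\rho}\psi$, which turns the radial equation into $\psi''+s^2\psi=\rho(\rho-1)(\sinh r)^{-2}\psi$. Inserting \eqref{eq:7.01} then gives
\[
\ell(\ell-\i s)\,\gamma_\ell(s)=\rho(\rho-1)\sum_{k=1}^{\ell}k\,\gamma_{\ell-k}(s),\qquad \gamma_0\equiv1 .
\]
So the poles of $\gamma_\ell$ lie at $s=-\i,\dots,-\i\ell$, in the lower half-plane, on the same side as those of $\bc(-s)^{-1}$. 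Nothing obstructs moving the contour up to $\Image s=\eta_p$.

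In fact only the finitely many terms with $2\ell<\eta_p$ need to be shifted. The terms with $2\ell\ge\eta_p$ already carry $\e^{-2\ell r}\le\e^{-\eta_p r}$. They can be handled on $\bR$ by integration by parts, using \eqref{eq:3-3.3} and $\sum_\ell\ell^\nu\e^{-2\ell r}<\infty$. Each remaining $\gamma_\ell$ is a rational function, holomorphic on $\{\Image s>-1\}$ and $\O(|s|^{-1})$ at infinity. It is therefore a symbol of order $-1$ on the closed strip, and these terms shift exactly like the $\ell=0$ term.

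Your prescription of moving the $\ell$-th term only to height $\min(\eta_p,\ell-\frac12)$ gives the $\ell=1$ term decay $\e^{-(\rho+\frac52)r}$. That is insufficient once $\eta_p>\frac52$, which happens for $n\ge7$ and $p$ large.

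Two smaller points. Lemma~\ref{lem:3.1}\eqref{lem:3.1(i)} controls only $s$-derivatives of $a(s,r)$. The regularity bound $|\partial_r k^0(r)|\lesssim r^{-n-1}$ therefore needs an $r$-differentiated form of \eqref{eq:7.0}, or another standard source. Also, for $p>2$ one has $\varphi_{\i\eta_p}(r)\asymp\e^{-(n-1)r/p}$ without the factor $1+r$; this is harmless, since you only use an upper bound.
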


\bigskip

%$$$$$$$$$$$$$$$$$$$$$$$$$$$$$$$$$$$$$$$$$$$$$$$$$$$$$$$$$$$$$$$$$$$$$$$$$$

\section{Proof of Theorem \ref{thm:1}}\label{sec:3}
\setcounter{equation}{0}

To show Theorem \ref{thm:1}, we need the following proposition, which plays an important role in the proof of Theorem \ref{thm:1}. 
\begin{proposition}\label{prop:3.1}
 Suppose $2\leq p<2n/(n-1)$ and $f$ is a radial function in $\bH^n$, and $T>0$. Then for any $\delta>0$
\begin{align}\label{eq:3-1}
\left(\int_T^{T+1}\left\|(-\Delta_{\bH^n})^{-\frac\delta2}\cos(tD)(f)\right\|^p_p\d t\right)^{\frac1p}\lesssim_{p,\delta} \e^{(n-1)\big(\frac12-\frac1p\big)T}\|f\|_p.
\end{align}
\end{proposition}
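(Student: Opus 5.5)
Our plan is to interpolate between the trivial $L^2$ estimate and an estimate just below the critical exponent $p_c=2n/(n-1)$, working directly with the radial kernel $\kappa_m$ from \eqref{eq:11}. Here $m(s)=m_t(s)=(s^2+(n-1)^2/4)^{-\delta/2}\cos(ts)$, so $m_t(D)=(-\Delta_{\bH^n})^{-\delta/2}\cos(tD)$.

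\emph{Step 0: the case $p=2$.} By \eqref{eq:9.1}, $\|m_t(D)f\|_2\le C\|f\|_2$ uniformly in $t$. This gives \eqref{eq:3-1} at $p=2$ with exponential factor $1$.

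\emph{Step 1: high frequencies.} We split $m_t=m_t\chi_0+m_t(1-\chi_0)$, where $\chi_0$ is a smooth even cutoff to $|s|\le 2$. We then write $\|u\|_p^p=C\int_0^\infty|u(R)|^p(\sinh R)^{n-1}\d R$ using \eqref{eq:2-4}. Next we insert Lemma \ref{lem:3.1} for both $\varphi_s(R)$ and $\varphi_s(r)$ in \eqref{eq:11}, together with Lemma \ref{lem:3.0}(ii) for $|\bc(s)|^{-2}$. The high-frequency part of $\kappa_{m_t}(R,r)$ then becomes a sum of one-dimensional oscillatory integrals of the form
\begin{align*}
(\sinh R)^{-\frac{n-1}2}(\sinh r)^{\frac{n-1}2}\int_0^\infty \e^{\i s(\pm R\pm r\pm t)}\,b(s,R,r)\,s^{-\delta}\,\d s,\qquad b(\cdot,R,r)\in S^0 .
\end{align*}
After conjugating by the weights, i.e.\ setting $g(r)=h(r)(\sinh r)^{(n-1)/p}$ and comparing with $u(R)(\sinh R)^{(n-1)/p}$, each piece becomes a one-dimensional operator of the form
\begin{align*}
g\mapsto (\sinh R)^{(n-1)(\frac1p-\frac12)}\int \big(\text{Fourier multiplier of order }-\delta\text{ in }R\pm r\pm t\big)\,(\sinh r)^{(n-1)(\frac12-\frac1p)}g(r)\,\d r.
\end{align*}
For $p<p_c$ and $r\le 1$ the weight $(\sinh r)^{(n-1)(1/2-1/p)}$ has exponent in $(0,1/p')$ relative to $\d r$, which is admissible. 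Since $\delta>0$ (and $t\in[T,T+1]$), the kernels of these multipliers are integrable away from the singular set $R=|r\pm t|$, and they decay like $|R\pm r\pm t|^{-1+\delta}$ near it. On the region where $R\approx r+t$ or $R\approx|r-t|$, the ratio of weights is at most $\e^{(n-1)(1/2-1/p)t}$ up to constants. This is exactly where the exponential factor arises.

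\emph{Step 2: the main obstacle, the weighted one-dimensional estimate.} We must show that the conjugated operator $g\mapsto w_1(R)\int K_\delta(R-r-t)\,w_2(r)g(r)\,\d r$ is bounded on $L^p(\d r)$ with norm $\lesssim \e^{(n-1)(1/2-1/p)T}$, uniformly for $t\in[T,T+1]$ (and similarly for the reflected phase $R+r-t$, including small $R$). For $R,r\ge1$ the weights are essentially $\e^{(n-1)(1/p-1/2)(R-r)}$, which is bounded by $\e^{(n-1)(1/2-1/p)t}$ on the relevant set. Away from the set, the off-diagonal tails of $K_\delta$ decay only polynomially, while the weights grow exponentially. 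Here we would use Lemma \ref{lem:3.1} and analytic continuation of $s\mapsto m(s)$ into a strip: since $s^{-\delta}$ is replaced by the analytic $(s^2+(n-1)^2/4)^{-\delta/2}$, the kernel in the variable $x=R-r-t$ decays like $\e^{-(n-1)|x|/2}$ for large $|x|$. This dominates the weight ratio $\e^{(n-1)(1/2-1/p)|x|}$, and Schur's test then applies. Near $R$ or $r$ small, we would use Lemma \ref{lem:3.1}(i) together with the restriction $p<2n/(n-1)$, which makes $(\sinh r)^{-(n-1)/2}\in L^{p'}_{\mathrm{loc}}((\sinh r)^{n-1}\d r)$ in the dual pairing. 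We expect this balance to be the delicate point. Integrating in $t$ over $[T,T+1]$ is only needed to average out the singular set, so the key bound is uniform in $t$.

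\emph{Step 3: low frequencies.} For $m_t\chi_0$, the kernel $k_{m_t\chi_0}$ from \eqref{eq:9.0} is bounded by $C(1+r)\e^{-(n-1)r/2}$ times a factor supported up to $r\lesssim t$, with Gaussian-type decay beyond. We would use the Kunze--Stein phenomenon, or the direct radial estimate $\|f*k\|_p\le\|f\|_p\int|k|\varphi_{\i(1/2-1/p)(n-1)}$ for radial $k$. This yields a bound $\lesssim \e^{(n-1)(1/2-1/p)T}$, possibly with a polynomial factor in $T$; the symbol $(s^2+(n-1)^2/4)^{-\delta/2}$ vanishes to no order at $s=0$, and Herz's principle for $p>2$ removes the polynomial loss when $p<p_c$. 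Finally we sum the pieces and integrate in $t$.
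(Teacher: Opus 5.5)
Your proposal has two genuine gaps, and both sit where the paper's proof does its real work.

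First, the claim that ``the key bound is uniform in $t$'' is false, so the region $R\to0$ cannot be handled at a fixed time. Take radial data at frequency $\lambda$ concentrated near the sphere $\{R=t\}$. It focuses at $\0$ at time $t$, and this forces a loss of $(n-1)(\frac12-\frac1p)$ derivatives at any fixed time, even for radial $f$. In your conjugated operator the loss enters through the output weight $(\sinh R)^{(n-1)(\frac1p-\frac12)}\sim R^{-(n-1)(\frac12-\frac1p)}$. Consequently:
\begin{itemize}
\item Your assertion that the weight ratio on the singular set is at most $\e^{(n-1)(\frac12-\frac1p)t}$ fails when $R\approx|r-t|\approx0$.
\item Already for the model kernel $|R+r-t|^{\delta-1}$, the input $g=\epsilon^{-1/p}\1_{[t-\epsilon,t]}$ produces output norm $\gtrsim\epsilon^{\delta-(n-1)(\frac12-\frac1p)}$. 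This tends to $\infty$ when $\delta<(n-1)(\frac12-\frac1p)$.
\item The integrability condition you invoke, $(\sinh r)^{-\frac{n-1}2}\in L^{p'}_{\mathrm{loc}}((\sinh r)^{n-1}\,\d r)$, is equivalent to $p>2n/(n+1)$. That is automatic for $p\geq2$, so it is not where $p<2n/(n-1)$ enters.
\end{itemize}
The paper uses the time average exactly here. For $R\in(0,1)$ it bounds the kernel by $\e^{(n-1)(\frac12-\frac1p)T}R^{-(n-1)(\frac12-\frac1p)}\sum A_\delta(|t\pm R\pm r|)$ and freezes $R$. It then applies Schur's test in the variables $(t,r)$, since this kernel is uniformly integrable in $t$ and in $r$. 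Only afterwards does it integrate $R^{(n-1)(1-\frac p2)}$ over $(0,1)$, which is finite precisely when $p<2n/(n-1)$.

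Second, the splitting $m_t=m_t\chi_0+m_t(1-\chi_0)$ with a compactly supported $\chi_0$ cannot work on $\bH^n$. By the necessity part of the Clerc--Stein theorem, whose proof only tests radial functions, a multiplier bounded on $L^p$ with $p\neq2$ extends holomorphically to $|\Image z|<(n-1)|\frac12-\frac1p|$. Time averaging does not rescue this. Suppose $(m_t\chi_0)(D)$ were bounded from radial $L^p$ into $L^p(\bH^n\times[T,T+1])$. Integrating in $t$ against $\psi\in L^{p'}([T,T+1])$ would then make $\chi_0(s)\,(s^2+\frac{(n-1)^2}4)^{-\delta/2}\int_T^{T+1}\psi(t)\cos(ts)\,\d t$ a bounded multiplier on radial $L^p$. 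That is impossible for a nonzero compactly supported function. So your low-frequency piece is unbounded: its kernel decays only like $\e^{-\frac{n-1}2r}(1+|r-t|)^{-N}$, with no Gaussian tail. Hence the high-frequency piece is unbounded too. The exponential tails you hope to get in Step 2 from analytic continuation are lost as soon as the non-analytic factor $1-\chi_0$ is inserted.

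The paper makes no frequency cutoff at all; it truncates in physical space instead.
\begin{itemize}
\item On $\{|R(x)-R(y)|\le t+1\}$ with $R\geq1$, the weight ratio is at most $\e^{(n-1)(\frac12-\frac1p)(t+1)}$, so the polynomially decaying bounds $A_\delta$ suffice.
\item On the complement one has $d(x,y)>t+1$. There Ionescu's estimate $|k_{\delta,t}(r)|\lesssim\e^{\frac{n-1}2t}\e^{-(n-1)r}(r-t)^{-2}$ for the full kernel gives an $L^\infty$ bound $\e^{\frac{n-1}2T}$. This is interpolated with the $L^2$ bound.
\end{itemize}
Ionescu's estimate is applied after replacing $(-\Delta_{\bH^n})^{-\delta/2}$ by $(-\Delta_{\bH^n}+\frac1{10})^{-\delta/2}$ via Proposition~\ref{prop2.3}. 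This replacement moves the branch points of the multiplier off the lines $\Image z=\pm\frac{n-1}2$.
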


\begin{proof}
By Proposition~\ref{prop2.3}, it suffices to show that for every $0<\delta<1/2$,
\begin{align}\label{eq:3-1r}
\left(\int_T^{T+1}\left\|\left(-\Delta_{\bH^n}+\frac{1}{10}\Id\right)^{-\frac\delta2}\cos(tD)(f)\right\|^p_p\d t\right)^{\frac1p}\lesssim_{p,\delta} \e^{(n-1)\big(\frac12-\frac1p\big)T}\|f\|_p.
\end{align}

We define
\begin{align*}
k_{\delta,t}(r):=\sF^{-1}\!\left[\left((\cdot)^2+\frac{(n-1)^2}4+\frac1{10}\right)^{-\frac\delta2}\cos(t\cdot)\right](r)
\end{align*}
such that
\begin{align*}
\left(-\Delta_{\bH^n}+\frac{1}{10} \Id \right)^{-\frac\delta2}\cos(tD)(f)(x)=\int_{\bH^n}f(y)\,k_{\delta,t}(d(x,y))\,\d y,
\end{align*}
and decompose the kernel $k_{\delta,t}(d(x,y))$ into $K_{\delta,t}(x,y)+E_{\delta,t}(x,y)$, where
\begin{align*}
K_{\delta,t}(x,y)=\1_{\{y\in\bH^n:|R(x)-R(y)|\leq t+1\}}(y)\,k_{\delta,t}(d(x,y)).
\end{align*}
We further define the operator
\begin{align*}
\sK_{\delta,t}(f)(x):=\int_{\bH^n}f(y)\,K_{\delta,t}(x,y)\,\d y,\quad\sE_{\delta,t}(f)(x):=\int_{\bH^n}f(y)\,E_{\delta,t}(x,y)\,\d y.
\end{align*}

We first deal with the operator $\sK_{\delta,t}$. Since $f$ is radial, we write $|f(x)|=h(R(x))$ for some function $h$ defined on $[0,\infty)$. We also set $f_0(r):=(\sinh r)^{\frac{n-1}p}h(r)$ such that
\begin{align}
\|f_0\|_{L^p(\bR_+)}=C\|f\|_p.
\end{align}
By \eqref{eq:2-2} and \eqref{eq:2-3}, we write in the Cartan coordinates
\begin{align*}
\sK_{\delta,t}(f)(x)&=C\int_0^\infty f(r)\,\1_{\{r\in\bR_+:|R(x)-r|\leq t+1\}}(r)\,(\sinh r)^{n-1}\\
&\hphantom{=C\int_0^\infty}\ \ \times\left(\int_{\bK}k_{\delta,t}(d(\fa_{R(x)},\fk\fa_r))\,\d\fk\right)\d r.
\end{align*}
Therefore, in view of \eqref{eq:2-4}, \eqref{eq:11} and \eqref{eq:11.2},
\begin{align}\label{eq:3-2}
\|\sK_{\delta,t}(f)\|_p^p\lesssim\int_0^\infty\left(\int_0^\infty f_0(r)\,\kappa_{\delta,t}(R,r)\,\d r\right)^p\d R,
\end{align}
where
\begin{align*}
\kappa_{\delta,t}(R,r)&=\1_{\{(R,r)\in(\bR_+)^2:|R-r|\leq t+1\}}(R,r)\,(\sinh r)^{(n-1)\left(1-\frac1p\right)}(\sinh R)^{\frac{n-1}p}\\
&\qquad\times\left|\vphantom{\int_0^\infty \left(s^2+\frac{(n-1)^2}4\right)^{-\frac\delta2}\cos(st)\,\varphi_s(R)\,\varphi_s(r)\,|\bc(s)|^{-2}\d s}\right.\underbrace{\int_0^\infty \left(s^2+\frac{(n-1)^2}4+\frac1{10}\right)^{-\frac\delta2}\cos(st)\,\varphi_s(R)\,\varphi_s(r)\,|\bc(s)|^{-2}\d s}_{\bI(R,r,t)}\left.\vphantom{\int_0^\infty \left(s^2+\frac{(n-1)^2}4+\frac1{10}\right)^{-\frac\delta2}\cos(st)\,\varphi_s(R)\,\varphi_s(r)\,|\bc(s)|^{-2}\d s}\right|.
\end{align*}
In view of Lemmas \ref{lem:3.0} and \ref{lem:3.1},
\begin{align*}
\bI(R,r,t)&=(\sinh r\cdot \sinh R)^{-\frac{n-1}2}\sum_{\iota_1,\iota_2\in\{-1,1\}}\int_{\bR}\e^{\i s(t+\iota_1R+\iota_2r)}a_{\delta,\iota_1,\iota_2}(s,R,r)\,\d s,
\end{align*}
where $a_{\delta,\pm1,\pm1}(\cdot,R,r)\in\sS^{-\delta}$ uniformly in $R,r>0$. By the classical estimates on Fourier transform of symbols \cite[p. 241]{Ste93},
\begin{align*}
|\bI(R,r,t)|\lesssim (\sinh r)^{-\frac{n-1}2}(\sinh R)^{-\frac{n-1}2}\sum_{\iota_1,\iota_2\in\{-1,1\}}A_\delta(|t+\iota_1R+\iota_2r|)
\end{align*}
for
\begin{align*}
A_\delta(\tau)=\tau^{\delta-1}(1+|\tau|)^{-100},
\end{align*}
whence
\begin{align*}
\kappa_{\delta,t}(R,r)&\lesssim\1_{\{(R,r)\in(\bR_+)^2:|R-r|\leq t+1\}}(R,r)\,\left(\frac{\sinh r}{\sinh R}\right)^{(n-1)\left(\frac12-\frac1p\right)}\\
&\qquad\times\sum_{\iota_1,\iota_2\in\{-1,1\}}A_\delta(|t+\iota_1R+\iota_2r|),
\end{align*}
Now we integrate on both sides of \eqref{eq:3-2} with respect to $t$ over $[T,T+1]$ to get
\begin{align}
&\int_T^{T+1}\|\sK_{\delta,t}(f)\|_p^p\d t\notag\\
&\quad\lesssim \e^{(n-1)\left(\frac p2-1\right)T}\int_0^1R^{(n-1)\left(1-\frac p2\right)}\int_T^{T+1}\left(\int_0^\infty f_0(r)\,\tkappa_{\delta,R}(t,r)\,\d r\right)^p\d t\,\d R\label{eq:19}\\
&\quad\qquad+\int_T^{T+1}\int_1^\infty\left(\int_0^\infty f_0(r)\,\kappa_{\delta,t}(R,r)\,\d r\right)^p\,\d R\,\d t\notag
\end{align}
provided $p\geq 2$, where
\begin{align*}
\tkappa_{\delta,R}(t,r):=\1_{[0,R+t+1]}(r)\sum_{\iota_1,\iota_2\in\{-1,1\}}A_\delta(|t+\iota_1R+\iota_2r|).
\end{align*}

On the one hand, for $R\in(0,1)$ and $\delta>0$, 
\begin{align*}
\sup_{t\in[T,T+1]}\int_0^\infty \tkappa_{\delta,R}(t,r)\,\d r+\sup_{r\in(0,\infty)}\int_T^{T+1}\tkappa_{\delta,R}(t,r)\,\d t\leq C_\delta.
\end{align*}
On the other hand, for $\delta$ as above, $t\in[T,T+1]$ and $p\geq 2$,
\begin{align*}
\sup_{R\in[1,\infty)}\int_0^\infty \kappa_{\delta,t}(R,r)\,\d r+\sup_{r\in(0,\infty)}\int_1^\infty \kappa_{\delta,t}(R,r)\,\d R\lesssim_\delta \e^{(n-1)\left(\frac12-\frac1p\right)T}.
\end{align*}
By Schur's test, we conclude that for all $\delta,p$ as above and $1\leq q\leq \infty$,
\begin{align*}
\sup_{R\in(0,1)}\int_T^{T+1}\left(\int_0^\infty f_0(r)\,\tkappa_{\delta,R}(t,r)\,\d r\right)^q\d t\lesssim_\delta \|f_0\|_{L^q(\bR_+)}^q
\end{align*}
and
\begin{align*}
\sup_{t\in[T,T+1]}\int_1^\infty\left(\int_0^\infty f_0(r)\,\kappa_{\delta,t}(R,r)\,\d r\right)^q\,\d R\lesssim_\delta \e^{q(n-1)\left(\frac12-\frac1p\right)T}\|f_0\|_{L^q(\bR_+)}^q.
\end{align*}
Therefore, according to \eqref{eq:19},
\begin{align}\label{eq:3-13}
\left(\int_T^{T+1}\|\sK_{\delta,t}(f)\|_p^p\d t\right)^{\frac1p}\lesssim_{p,\delta} \e^{(n-1)\left(\frac12-\frac1p\right)T}\|f_0\|_{L^p(\bR_+)}\sim\e^{(n-1)\left(\frac12-\frac1p\right)T} \|f\|_p
\end{align}
provided $2\leq p<2n/(n-1)$ and $\delta>0$.\medskip

It remains to handle the error term $\sE_{\delta,t}$. Indeed, on the one hand, note that the triangle inequality implies that $E_{\delta,t}$ is supported in $\{(x,y):d(x,y)>t+1\}$. Hence by the argument proving \cite[(3.9)]{Ion00},
\begin{align*}
|E_{\delta,t}(x,y)|\lesssim e_{\delta,t}(d(x,y))
\end{align*}
for
\begin{align*}
e_{\delta,t}(r)=\1_{(t+1,\infty)}(r)\,\e^{\frac{n-1}2t}\e^{-(n-1)r}(r-t)^{-2},
\end{align*}
whence
\begin{align*}
\|\sE_{\delta,t}(f)(x)\|_{L^\infty_{x,t}(\bH^n\times[T,T+1])}\leq \|e_{\delta,t}\|_1\|f\|_\infty\lesssim \e^{\frac{n-1}2T}\|f\|_\infty
\end{align*}
in view of Schur's test and \eqref{eq:2-4}. On the other hand, according to \eqref{eq:9.1} and \eqref{eq:3-13} with $p=2$,
\begin{align*}
&\|\sE_{\delta,t}(f)(x)\|_{L^2_{x,t}(\bH^n\times[T,T+1])}\\
&\ \ \leq \sup_{t\in[T,T+1]}\left\|\left(-\Delta_{\bH^n}+\frac{1}{10}\Id\right)^{-\frac\delta2}\cos(tD)(f)\right\|_2+\left(\int_T^{T+1}\|\sK_{\delta,t}(f)\|_2^2\d t\right)^{\frac12}\lesssim \|f\|_2.
\end{align*}
Therefore, by interpolation, for all $2\leq p\leq \infty$,
\begin{align*}
\left(\int_T^{T+1}\|\sE_{\delta,t}(f)\|^p_p\d t\right)^{\frac1p}\lesssim \e^{(n-1)\left(\frac12-\frac1p\right)T}\|f\|_p.
\end{align*}
This, combined with \eqref{eq:3-13}, yields the desired estimate \eqref{eq:3-1r}. The proof of Proposition~\ref{prop:3.1} is complete. 
\end{proof}

Now we start to prove Theorem \ref{thm:1}.
\begin{proof}[Proof of Theorem \ref{thm:1}] 
Recall that the solution to \eqref{eq:1} can be written as $u(t,x)=\cos(tD)(f)(x)$. We shall firstly use Theorem \ref{thmA:A} and Proposition \ref{prop:3.1} to establish the estimate
\begin{align}\label{eq:21}
\int_T^{T+1}\left\|(-\Delta_{\bH^n})^{-\frac\alpha2}\cos(tD)(f)\right\|_p^p\d t\lesssim_{\alpha,p}\e^{p(n-1)\big(\frac12-\frac1p\big)T}\|f\|_p^p
\end{align}
for $p\to \infty$ and $p\to (2n/(n-1))^-$, respectively, and then use interpolation. In the sequel we fix $\eta\in\bR$.

On the one hand, for $10000\leq p_1<\infty$ we have
\begin{align*}
&\int_T^{T+1}\left\|(-\Delta_{\bH^n})^{-\frac{n-1}2\left(\frac12-\frac1{p_1}\right)-\frac{\i\eta}2}\cos(tD)(f)\right\|_{p_1}^{p_1}\d t\\
&\qquad\lesssim_{p_1} \e^{(n-1)\big(\frac12-\frac1{p_1}\big)T}\left\|(-\Delta_{\bH^n})^{-\frac{\i\eta}2}f\right\|_{p_1}^{p_1}\leq C_\eta \e^{p_1(n-1)\big(\frac12-\frac1{p_1}\big)T}\|f\|_{p_1}^{p_1}
\end{align*}
with $C_\eta\lesssim\e^{c\eta^2}$, where the first inequality follows from Theorem \ref{thmA:A} and the last inequality follows from Proposition ~\ref{prop2.3}. Hence \eqref{eq:21} holds with $p=p_1$, $\Real\alpha=(n-1)(1/2-1/p_1)$ and the implicit constant of at most $\e^{c(\Image\alpha)^2}$ growth on $\Image\alpha$.

On the other hand, for $2\leq p_2<2n/(n-1)$ and $\delta>0$ we have
\begin{align*}
&\int_T^{T+1}\left\|(-\Delta_{\bH^n})^{-\frac\delta2-\frac{\i\eta}2}\cos(tD)(f)\right\|_{p_2}^{p_2}\d t\\
&\ \ \lesssim_{p_2,\delta} \e^{(n-1)\big(\frac12-\frac1{p_2}\big)T}\left\|(-\Delta_{\bH^n})^{-\frac{\i\eta}2}f\right\|_{p_2}^{p_2}\leq C_\eta\,\e^{p_2(n-1)\big(\frac12-\frac1{p_2}\big)T}\|f\|_{p_2}^{p_2}
\end{align*}
with $C_\eta\lesssim\e^{c\eta^2}$, where the first inequality follows from Proposition \ref{prop:3.1} and the last inequality follows from Proposition ~\ref{prop2.3}. Hence \eqref{eq:21} holds with $p=p_2$, $\Real\alpha=\delta$ and the implicit constant of at most $\e^{c(\Image\alpha)^2}$ growth on $\Image\alpha$.

Using the interpolation of analytic family of operators (see e.g. \cite{Sag69}), we obtain that \eqref{eq:21} holds for $p_2\leq p\leq p_1$ and
\begin{align*}
\alpha=(n-1)\left(\frac12-\frac1{p_1}\right)\frac{\frac1{p_2}-\frac1p}{\frac1{p_2}-\frac1{p_1}}+\delta\frac{\frac1{p_1}-\frac1p}{\frac1{p_1}-\frac1{p_2}}=(n-1)\left(\frac12-\frac1p\right)-\gamma,
\end{align*}
where
\begin{align*}
\gamma=(n-1)\left(\frac1{p_1}-\frac1p\right)+(n-1)\left(\frac12-\frac1{p_1}\right)\frac{\frac1p-\frac1{p_1}}{\frac1{p_2}-\frac1{p_1}}-\delta\frac{\frac1{p_1}-\frac1p}{\frac1{p_1}-\frac1{p_2}}<\frac1p.
\end{align*}
Consequently, \eqref{eq:21} holds with $2n/(n-1)\leq p<\infty$ and $\alpha=(n-1)(1/2-1/p)-\gamma$ with $\gamma<1/p$, as desired. This completes the proof of \eqref{eq:4.1}.

Now we show the optimality of the exponential part $\e^{(n-1)(1/p-1/2)T}$ of the bound in \eqref{eq:4.1} of Theorem~\ref{thm:1}. In particular, we shall show a somewhat stronger result as follows.
\begin{proposition}\label{prop:6}
For all $2\leq p<\infty$, $T\geq 0$ and $\alpha\in\bR$, if
\begin{align}\label{eq:16.1}
\left(\int_T^{T+1}\|\cos(tD)(f)\|^p_p\d t\right)^{\frac1p}\leq C_p(T)\,\|(-\Delta)^\alpha f\|_p
\end{align}
holds for all radial functions $f$ on $\bH^n$, then we must have
\begin{align*}
C_p(T)\gtrsim\e^{(n-1)\big(\frac12-\frac1p\big)T}.
\end{align*}
\end{proposition}
\begin{proof}
Instead of considering \eqref{eq:16.1}, we shall focus on another estimate
\begin{align}\label{eq:16.2}
\left(\int_T^{T+1}\left\|(-\Delta_{\bH^n})^{\frac18}\cos(tD)(f)\right\|^p_p\d t\right)^{\frac1p}\leq C_p(T)\,\|(-\Delta_{\bH^n})^{\alpha'} f\|_p\quad\text{for }\alpha'=\alpha+\frac18
\end{align}
equivalent to \eqref{eq:16.1}.

Let $c>0$ be a sufficiently small but fixed constant to be determined later, $h\in C^\infty_c(\bR_+)$ such that $0\leq h\leq 1$ and
\begin{align*}
h(r)=\begin{cases}
1 & \text{on }\big[\frac12+5c,\frac12+6c\big],\smallskip\\
0 & \text{outside }[\frac12+4c,\frac12+7c].
\end{cases}
\end{align*}
We set ad radial function $f_T(x):=h(R(x)-T)$ on $\bH^n$. Then it follows from the multiplier theorem (see e.g. \cite[Theorem 3]{CS74} or \cite[Theorem A]{Tay89}) that if we set $k:=\max\{0,\lceil\alpha'\rceil\}$, then
\begin{align}\label{eq:16.1.1}
\|(-\Delta_{\bH^n})^{\alpha'}f_T\|_p\lesssim \|(-\Delta_{\bH^n})^kf_T\|_p\lesssim \|h\|_{C^N}\|\mathbf{1}_{[T+\frac12+4c,T+\frac12+7c]}(R(\cdot))\|_p\lesssim \e^{\frac{n-1}pT},
\end{align}
where the second inequalities follows from the fact that
\begin{align*}
\Delta_{\bH^n} f_T(x)=\frac{\partial^2h(R(\cdot)-T)}{\partial r^2}(R(x))+(n-1)\coth(R(x))\frac{\partial h(R(\cdot)-T)}{\partial r}(R(x)),
\end{align*}
see e.g. \cite[(3.85)]{Gri09}.

Next we need to bound
\begin{align*}
\left(\int_T^{T+1}\left\|(-\Delta_{\bH^n})^{\frac18}\cos(tD)(f_T)\right\|^p_p\d t\right)^{\frac1p}
\end{align*}
from below. To do this, we denote by $k_t$ the convolution kernel of the operator $(-\Delta_{\bH^n})^{1/8}\cos(tD)$ and let $\eta$ be an even smooth function such that
\begin{align*}
\eta\equiv \begin{cases}
1 & \text{outside }[-3,3],\\
0 & \text{on }[-2,2].
\end{cases}
\end{align*}
Then according to \eqref{eq:9.0}, Lemmas \ref{lem:3.0}\eqref{lem:3.0(iii)} and \ref{lem:3.1}\eqref{lem:3.1(ii)}, for $t\in[T+1/2,T+1/2+c]$,
\begin{align}
k_t(x)&=\frac{2^{n-3}\Gamma\!\left(\frac n2\right)}{\uppi^{\frac n2+1}}\int_0^\infty \left(s^2+\frac{(n-1)^2}4\right)^{\frac18}\cos(ts)\,\varphi_s(R(x))\,|\bc(s)|^{-2}\d s\notag\\
&=C_n\,(\sinh R(x))^{-\frac{n-1}2}\!\left[\vphantom{\int_0^\infty \eta(s)\,s^{\frac n2-\frac14}\cos(ts)\cos\!\left(sR(x)-\frac{(n-1)\uppi}4\right)\d s}\right.\!\!\underbrace{\int_0^\infty \eta(s)\,s^{\frac n2-\frac14}\cos(ts)\cos\!\left(sR(x)-\frac{(n-1)\uppi}4\right)\d s}_{\tk_t(x)}\label{eq:16.3}\\
&\hphantom{C_n\e^{-\frac{n-1}2R(x)}}\ \ +\underbrace{\int_0^\infty \cos(ts)\left(\e^{\i sR(x)}a(s,R(x))+\e^{-\i sR(x)}a(-s,R(x))\right)\d s}_{e_t(x)}\left.\vphantom{\int_0^\infty \cos(ts)\left(\e^{\i sR(x)}a(s,R(x))+\e^{-\i sR(x)}a(-s,R(x))\right)\d s}\!\!\right]\!,\notag
\end{align}
where $a(\cdot,r)\in S^{n/2-5/4}$ uniformly in $r\geq 1/2$.

To handle $\tk_t$, for $r>0$ and $\gamma\notin2\bZ+1$, in view of \cite[Theorem 2.4.6]{Gra14}, we have
\begin{align}\label{e6.001}
\int_0^\infty\cos(sr)\,s^\gamma\d s=\frac{\Gamma\!\left(\frac{\gamma+1}2\right)}{\Gamma\!\left(-\frac\gamma2\right)}2^\gamma\sqrt{\uppi}\,r^{-1-\gamma}.
\end{align}
Taking derivatives with respect to $r$ on both sides of \eqref{e6.001} and then replacing $\gamma +1$ by $\gamma$, in view of \cite[p. 3, (6) and (7)]{EMOT53}, we obtain that for $\gamma\notin2\bZ$,
\begin{align*}
\int_0^\infty\sin(sr)\,s^\gamma\d s=-\frac{\Gamma\!\left(\frac{\gamma+1}2\right)}{\Gamma\!\left(-\frac\gamma2\right)}\cot\!\left(\frac{\gamma\uppi}2\right)2^\gamma\sqrt{\uppi}\,r^{-1-\gamma}.
\end{align*}
Since
\begin{align*}
&\cos\!\left(sr-\frac{(n-1)\uppi}4\right)\cos(ts)\\
&\quad=\frac12\!\left[\cos\!\left((r-t)s-\frac{(n-1)\uppi}4\right)+\cos\!\left((r+t)s-\frac{(n-1)\uppi}4\right)\right],
\end{align*}
we have
\begin{align*}
\int_0^\infty s^{\frac n2-\frac14}\cos(ts)\cos\!\left(sr-\frac{(n-1)\uppi}4\right)\d s=C\,(r-t)^{-\frac n2-\frac34}+\O(1)
\end{align*}
for $t<r\leq t+10c$ with $c$ small enough. In addition,
\begin{align*}
\int_0^\infty (1-\eta(s))\,s^{\frac n2-\frac14}\cos(ts)\cos\!\left(sr-\frac{(n-1)\uppi}4\right)\d s=\O(1).
\end{align*}
Hence
\begin{align}\label{eq:16.4}
\tk_t(x)=C\,(R(x)-t)^{-\frac n2-\frac34}+\O(1)
\end{align}
for $t\in[T+1/2,T+1/2+c]$ and $t<R(x)\leq t+10c$. On the other hand, by the classical estimates on Fourier transform of symbols \cite[p. 241]{Ste93}, for $c$ small enough and $t,x$ as above we have
\begin{align}\label{eq:16.5}
|e_t(x)|\lesssim (R(x)-t)^{\frac14-\frac n2}.
\end{align}

It follows from \eqref{eq:16.3}--\eqref{eq:16.5} that for $t\in[T+1/2,T+1/2+c]$ and $x\in B_c(\0)$ with $c$ sufficiently small,
\begin{align*}
&\!\!\!\!\left|(-\Delta_{\bH^n})^{\frac18}\cos(tD)(f_T)(x)\right|\\
&=|f_T*k_t(x)|\\
&\geq |f_T*[(\sinh R(\cdot))^{-\frac{n-1}2}\tk_t](x)|-|f_T*[(\sinh R(\cdot))^{-\frac{n-1}2}e_t](x)|\\
&\gtrsim \int_{B_{T+\frac12+6c}(\0)\backslash B_{T+\frac12+5c}(\0)}(\sinh d(x,y))^{-\frac{n-1}2}(d(x,y)-t)^{-\frac n2-\frac34}\d y\\
&\qquad -\int_{B_{T+\frac12+7c}(\0)\backslash B_{T+\frac12+4c}(\0)}(\sinh d(x,y))^{-\frac{n-1}2}\left(\O(1)+(d(x,y)-t)^{\frac14-\frac n2}\right)\d y\\
&\sim \e^{\frac{n-1}2T}c^{1-\frac n2-\frac34}\left(1-c\right)\gtrsim \e^{\frac{n-1}2T},
\end{align*}
whence
\begin{align*}
&\!\!\!\!\left(\int_T^{T+1}\left\|(-\Delta_{\bH^n})^{\frac18}\cos(tD)(f_T)\right\|^p_p\d t\right)^{\frac1p}\\
&\geq \left(\int_{T+\frac12}^{T+\frac12+c}\left\|(-\Delta_{\bH^n})^{\frac18}\cos(tD)(f_T)\right\|^p_{L^p(B_c(\0))}\d t\right)^{\frac1p}\gtrsim \e^{\frac{n-1}2T}.
\end{align*}
This combined with \eqref{eq:16.1.1} yields that if \eqref{eq:16.2} holds for all radial functions $f$, then we must have
\[
C_p(T)\gtrsim \e^{\frac{n-1}2T}\e^{-\frac{n-1}pT}=\e^{(n-1)\big(\frac12-\frac1p\big)T}.\qedhere
\]
Hence, the proof of Theorem~\ref{thm:1} is complete.
\end{proof}
\end{proof}
\bigskip

%$$$$$$$$$$$$$$$$$$$$$$$$$$$$$$$$$$$$$$$$$$$$$$$$$$$$$$$$$$$$$$$$$$$$$$$$$$

\section{Endpoint Results of Proposition \ref{prop:3.1} for \texorpdfstring{$n=3$}{n=3}}\label{sec:A}
\setcounter{equation}{0}

In this section, we prove an endpoint result of Proposition \ref{prop:3.1} for the case $n=3$ by using the explicit kernel expression of the wave propagator. More precisely, we have the following result.

\begin{proposition}\label{prop:5}
Suppose $2\leq p<3$ and $f$ is a radial function in $\bH^3$. Then for $T\geq 0$
\begin{align*}
\left(\int_T^{T+1}\|\cos(tD)(f)\|^p_p\d t\right)^{\frac1p}\lesssim_p \e^{2\big(\frac12-\frac1p\big)T}\|f\|_p.\medskip
\end{align*} 
In particular, $u(t,\cdot)\in L^p(\bH^3)$ for almost all times $t$, provided that $f\in L^p(\bH^3)$ is a radial function and $2\leq p<3$.
\end{proposition}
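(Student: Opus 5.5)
The plan is to exploit the fact that on $\bH^3$ the shifted radial wave equation conjugates exactly to the one-dimensional wave equation. No symbol estimates or Schur tests are needed, and the restriction $p<3$ should appear only as a local integrability condition at the origin.

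\textbf{Step 1: reduce to a one-dimensional wave equation.} Write $f(x)=h(R(x))$. For radial $v(r)$ on $\bH^3$ we have $\Delta_{\bH^3}v=v''+2\coth r\,v'$, and one checks directly that
\[
\big((\sinh r)\,v\big)''=(\sinh r)\big(v''+2\coth r\,v'+v\big).
\]
So $(-\Delta_{\bH^3}-\Id)$ acting on $v$ corresponds to $-\partial_r^2$ acting on $w=(\sinh r)\,v$. Set $g(r)=(\sinh r)\,h(r)$ and extend $g$ oddly to $\bR$. The function $u(t,\cdot)=\cos(tD)(f)$ is radial, and by d'Alembert's formula
\[
u(t,x)=\frac{g(R(x)+t)+g(R(x)-t)}{2\sinh R(x)} .
\]
I would justify this first for $f$ smooth and compactly supported (uniqueness of the Cauchy problem, or directly via \eqref{eq:8} with $n=3$, where $\varphi_s(r)=\sin(sr)/(s\sinh r)$), and then extend by density. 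By \eqref{eq:2-4},
\[
\|u(t,\cdot)\|_p^p\lesssim\sum_\pm\int_0^\infty|g(R\pm t)|^p(\sinh R)^{2-p}\,\d R,
\qquad
\|f\|_p^p\sim\int_0^\infty|g(r)|^p(\sinh r)^{2-p}\,\d r .
\]

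\textbf{Step 2: average in time and change variables.} Integrate over $t\in[T,T+1]$ and substitute $r=R\pm t$, using $|g|$ even. The estimate then reduces to the weight bound
\[
W(r):=\sup_\pm\int_T^{T+1}(\sinh|r\mp t|)^{2-p}\,\1_{\{r\mp t\geq0\}}\,\d t\;\lesssim\;\e^{(p-2)T}(\sinh|r|)^{2-p}\quad\text{for all } r\in\bR .
\]
Taking $p$-th roots then gives the exponent $\e^{(p-2)T/p}=\e^{2(\frac12-\frac1p)T}$.

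\textbf{Step 3: verify the weight bound.} This is the main point. Since $2\le p<3$, we have $2-p\in(-1,0]$, so $(\sinh\tau)^{2-p}\lesssim\tau^{2-p}+1$ on $[0,\infty)$, and this is integrable over any interval of length $1$. Hence $W(r)\lesssim1$ uniformly in $r$ and $T$; this is exactly where $p<3$ is used. Now split into cases.
\begin{itemize}
\item If $|r|\le T+2$: $(\sinh|r|)^{2-p}\gtrsim\e^{(2-p)T}$, both when $|r|\ge1$ (from $\sinh|r|\lesssim\e^{T+2}$) and when $|r|<1$ (where the weight is $\ge c$). So the bound $W\lesssim1$ suffices.
\item If $|r|>T+2$: the arguments satisfy $|r\mp t|\ge|r|-T-1\geq 1$, so the integrand is at most about $\e^{(2-p)(|r|-T-1)}\lesssim\e^{(p-2)T}(\sinh|r|)^{2-p}$.
\end{itemize}
In both cases the weight bound holds, which proves the inequality for $2\le p<3$.

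\textbf{Step 4: the consequence.} By Fubini, finiteness of the time-averaged norm over every interval $[T,T+1]$ gives $u(t,\cdot)\in L^p(\bH^3)$ for almost every $t$.

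The only delicate points I expect are the density argument justifying the explicit formula for general $f\in L^p$, and the careful handling of the odd extension when $R-t<0$. The latter is covered by using $|g(-r)|=|g(r)|$ in Step 2.
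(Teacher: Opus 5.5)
Your proposal is correct and is essentially the paper's argument. The paper obtains the same representation $u(t,x)=\frac{g(R+t)+g(R-t)}{2\sinh R}$ in kernel form, since $\varphi_s(r)=\frac{\sin sr}{s\sinh r}$ and $|\bc(s)|^{-2}=s^2$ collapse $\kappa_t(R,r)$ into $\delta_0$-terms; it then bounds the resulting pieces $\bJ_1,\bJ_2,\bJ_3$ by the same change of variables, using $p<3$ only for the local integrability of $(\sinh\tau)^{2-p}$. Your conjugation identity $\big((\sinh r)v\big)''=\sinh r\,(v''+2\coth r\,v'+v)$ and your single weight bound $W(r)\lesssim\e^{(p-2)T}(\sinh|r|)^{2-p}$ are a cleaner packaging of that bookkeeping, and your density remark addresses a point the paper leaves implicit.
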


\begin{proof}
Since $f$ is radial, we write $f(x)=h(R(x))$ for some function $h$ defined on $[0,\infty)$ and extend the domain of $h$ from $[0,\infty)$ to $\bR$ by setting $h\equiv 0$ on $(-\infty,0)$. We also set $f_0(r):=(\sinh r)^{\frac2p}h(r)$. In addition, letting $n=3$, \eqref{eq:7} and \eqref{eq:8} become
\begin{align}\label{eq:A1}
\varphi_s(r)=\frac1{\sinh r}\frac{\sin sr}s\quad\text{and}\quad\bc(s)=-\frac\i s,
\end{align}
respectively. In view of \eqref{eq:2-4}, \eqref{eq:10} and \eqref{eq:11},
\begin{align}\label{eq:A12}
\|\cos(tD)(f)\|_p^p=\int_0^\infty\left|\int_0^\infty f_0(r)\,\kappa_t(R,r)\,\d r\right|^p\d R,
\end{align}
where
\begin{align}
\kappa_t(R,r)&=C\left(\frac{\sinh r}{\sinh R}\right)^{1-\frac2p}\int_0^\infty \cos st\,\sin sR\,\sin sr\,\d s\notag\\
&=C\left(\frac{\sinh r}{\sinh R}\right)^{1-\frac2p}\int_{\mathbb R}\big(\e^{\mathrm ist}+\e^{-\mathrm ist}\big)\big(\e^{\mathrm is(r+R)}-\e^{\mathrm is(r-R)}\big)\mathrm d s\notag\\
&=C\left(\frac{\sinh r}{\sinh R}\right)^{1-\frac2p}\big(\delta_0(t+r+R)+\delta_0(r+R-t)-\delta_0(t+r-R)\label{eq:A13}\\
&\hphantom{=C\left(\frac{\sinh r}{\sinh R}\right)^{1-\frac2p}}\ \ \quad -\delta_0(r-R-t)\big)\notag.
\end{align}

Finally, by substituting \eqref{eq:A13} into \eqref{eq:A12}, for $t>0$ we have
\begin{align}
\|\cos(tD)(f)\|_p^p&\lesssim\int_0^\infty(\sinh R)^{2-p}\left[|f_0(t-R)|^p(\sinh(t-R))^{p-2}\right.\label{eq:A14}\\
&\hphantom{\lesssim\int_0^\infty(\sinh R)^{2-p}[}\ \ \left.+|f_0(R-t)|^p(\sinh(R-t))^{p-2}\right.\notag\\
&\hphantom{\lesssim\int_0^\infty(\sinh R)^{2-p}[}\ \ \left.+|f_0(t+R)|^p(\sinh(t+R))^{p-2}\right]\d R.\medskip\notag
\end{align}

In view of \eqref{eq:A14}, we can write
\begin{align*}
\int_T^{T+1}\|\cos(tD)(f)\|^p_p\d t\lesssim \bJ_1+\bJ_2+\bJ_3,
\end{align*}
where
\begin{align*}
\bJ_1&=\int_T^{T+1}\int_0^t |f_0(t-R)|^p(\sinh(t-R))^{p-2}(\sinh R)^{2-p}\d R\,\d t\\
&\lesssim \e^{(p-2)T}\int_0^{T+1}|f_0(r)|^p\left(\int_{\max\{r,T\}}^{T+1}(\sinh(t-r))^{2-p}\d t\right)\d r\\
&\lesssim_p\e^{2p\left(\frac12-\frac1p\right)T}\|f_0\|_{L^p(\bR_+)}^p=C\,\e^{2p\left(\frac12-\frac1p\right)T}\|f\|_p^p
\end{align*}
since $p<3$,
\begin{align*}
\bJ_2&=\int_T^{T+1}\int_t^\infty |f_0(R-t)|^p(\sinh(R-t))^{p-2}(\sinh R)^{2-p}\d R\,\d t\\
&\leq \int_T^{T+1}\int_0^\infty |f_0(r)|^p(\sinh(r+t))^{p-2}(\sinh(r+t))^{2-p}\d r\,\d t\\
&=\|f_0\|_{L^p(\bR_+)}^p\lesssim \|f\|_p^p
\end{align*}
since $p\geq 2$, and
\begin{align*}
\bJ_3&=\int_T^{T+1}\int_0^\infty |f_0(t+R)|^p(\sinh(t+R))^{p-2}(\sinh R)^{2-p}\d R\,\d t\\
&\leq \e^{(p-2)T}\int_T^{T+1}\int_0^\infty |f_0(R+t)|^p\left(\frac{\e^R}{\sinh R}\right)^{p-2}\d R\,\d t\\
&\lesssim \e^{(p-2)T}\left(\int_0^1R^{2-p}\left(\int_{T+R}^{T+R+1}|f_0(r)|^p\d r\right)\d R+\int_T^{T+1}\int_t^\infty|f_0(r)|^p\d r\,\d t\right)\\
&\lesssim_p\e^{2p\left(\frac12-\frac1p\right)T}\|f_0\|_{L^p(\bR_+)}^p\lesssim\e^{2p\left(\frac12-\frac1p\right)T}\|f\|_p^p
\end{align*}
since $p<3$. This yields the desired estimate. The proof of Proposition~\ref{prop:5} is complete.
\end{proof}

% \bibliographystyle{amsplain-modified}
% \bibliography{references}

\begin{remark} (i)
It is not known for endpoint results of Proposition \ref{prop:3.1} for the cases $n=2$ or $n>3$.

(ii) In light of Theorem \ref{thm:1}, it would be interesting to conjecture the following statement: Let $u=u(t,x)$ be a solution to wave equation \eqref{eq:1}. Suppose $2n/(n-1)\leq p<\infty$ and $f$ is a function in $\bH^n$. Let $\gamma<1/p$. Then for $T\geq 0$,
\begin{align}\label{eq:4.3}
\left(\int_T^{T+1}\|u(t,\cdot)\|^p_p\d t\right)^{\frac1p}\lesssim_{p,\gamma} \e^{(n-1)\big(\frac12-\frac1p\big)T}\left\|(-\Delta_{\bH^n})^{\frac{n-1}2\left(\frac12-\frac1p\right)-\frac\gamma2}f\right\|_p.
\end{align}

For some range of $p$ and $\gamma$, estimate \eqref{eq:4.3} with the exponential part $\e^{(n-1)(1/2-1/p)T}$ of the bound replaced by $\e^{(n-1)(3/2-1/p)T}$ can be derived using the local smoothing estimates for Fourier integral operators (see e.g. \cite{BHS, GHLW26, GLMX23}) together with the localization argument of Ionescu \cite{Ion00}. However, whether the threshold of $p$ can attain $2n/(n-1)$ is open for $n\geq 2$, simultaneously the bound on $T$ there is considerably worse. For details see the proof of \cite[Lemma 4.9]{CSWY25}.
\end{remark}
\medskip

\noindent
{\bf Acknowledgments}: The authors were supported by National Key R$\&$D Program of China 2022YFA1005700. P. Chen was supported by NNSF of China 12171489 and 12671127. X.T. Duong was supported by the research grant ARC DP260101083 from the Australian Research Council. L. Yan was supported by NNSF of China 12571111.

\providecommand{\bysame}{\leavevmode\hbox to3em{\hrulefill}\thinspace}
\providecommand{\href}[2]{#2}

\end{document}